\newtheorem{theorem}{Theorem}
\newtheorem{lemma}[theorem]{Lemma}
\newtheorem{conjecture}[theorem]{Conjecture}
\newtheorem{proposition}[theorem]{Proposition}
\numberwithin{theorem}{section}
\theoremstyle{definition}
\newtheorem{example}[theorem]{Example}
\newtheorem{remark}[theorem]{Remark}
\numberwithin{equation}{section}
\newcommand\simarrow{\stackrel{\sim}{\rightarrow}}
\newcommand\xb{\overline{x}}
\newcommand\yb{\overline{y}}
\newcommand\Z{\mathbb{Z}}
\begin{document}

\begin{center}
{\Large{\bf Generalized energies and 
integrable $D^{(1)}_n$ cellular automaton}}
\vspace{3mm}\\
{\large Atsuo Kuniba, Reiho Sakamoto and Yasuhiko Yamada}
\vspace{3mm}
\end{center}

\begin{center}
\begin{small}
{\it Dedicated to  Professor Tetsuji Miwa
on his 60th birthday}
\end{small}
\end{center}

\author{Atsuo Kuniba}

\address{Institute of Physics, University of Tokyo, Komaba\\
Tokyo 153-8902, Japan\\
}

\author{Reiho Sakamoto}

\address{Department of Physics, Tokyo University of Science\\
Tokyo 162-8601, Japan}

\author{Yasuhiko Yamada}
\address{Department of Mathematics, Faculty of Science,
Kobe University\\
 Hyogo 657-8501, Japan}
\begin{quotation}
{\small
A}{\tiny BSTRACT:}
{\small
We introduce generalized energies for a class of 
$U_q(D^{(1)}_n)$ crystals 
by using the piecewise linear functions 
that are building blocks of the combinatorial $R$.
They include the conventional energy in 
the theory of affine crystals as a special case.
It is shown that the generalized energies count 
the particles and anti-particles 
in a quadrant of the two dimensional lattice
generated by time evolutions 
of an integrable $D^{(1)}_n$ cellular automaton.
Explicit formulas are conjectured for some of them  
in the form of ultradiscrete tau functions.
}
\end{quotation}

\keywords{crystal base, integrable cellular automaton, generalized energy,
combinatorial Bethe ansatz, inverse scattering method, 
ultradiscrete tau function}

\section{Introduction}\label{sec:intro}

Let $B_l$ be the crystal of the 
$l$-fold symmetric tensor representation of 
the quantum affine algebra 
$U_q(D^{(1)}_n)$ \cite{KMN2,KKM}.
The combinatorial $R$ : 
$x\otimes y\mapsto y' \otimes x'$
is the isomorphism of crystals
$B_l \otimes B_m \stackrel{\sim}{\rightarrow} 
B_m \otimes B_l$ 
corresponding to the quantum $R$ at $q=0$ \cite{KMN1}.
In Ref. \cite{KOTY1}, an explicit formula of the combinatorial $R$ 
was obtained in terms of 
several piecewise linear functions $g_i(x\otimes y)\in \Z$
on $B_l \otimes B_m$.
See Theorem \ref{th:koty}.
Among them is the {\em local energy}, 
which plays an essential role 
in the theory of affine crystals \cite{KMN1}.
The family of piecewise linear functions $\{g_i\}$, which we call 
{\em generalized local energies} in this paper,
are ultradiscretization of the 
subtraction-free rational functions that have 
emerged as building blocks of the 
tropical $R$ \cite{KOTY1,KOTY2} of the 
geometric crystal \cite{BK}.
They may be viewed as local energies 
in a principal picture rather than in the conventional  
homogeneous picture.

{}From the local energy, one can form the 
integer-valued function called 
{\em energy} on the tensor product 
${\mathcal P}=B_{l_1}\otimes  \cdots \otimes B_{l_L}$.
Its generating function is the one dimensional configuration sum that
originates in the corner transfer matrix method \cite{ABF,Ba}.

In this paper we introduce {\em generalized energies} 
${\mathcal E}_{g_i}: {\mathcal P} \rightarrow \Z_{\ge 0}$
corresponding to $g_i$'s, and study them from the viewpoint of 
the integrable cellular automaton of type $D^{(1)}_n$ \cite{HKT1,HKT2}.
The latter is an integrable $U_q(D^{(1)}_n)$ vertex model 
at $q=0$.
It is a dynamical system on ${\mathcal P}$
equipped with commuting time evolutions $\{T_l\}_{l \ge 1}$.
Elements of ${\mathcal P}$ are naturally regarded as 
arrays of particles and anti-particles, and 
$T_l$ induces their factorized scattering involving 
pair creation and annihilation.
See Examples \ref{ex:hatten1} and \ref{ex:hatten2}.

Our main result is Theorem \ref{th:main}, which states that 
${\mathcal E}_{g_i}(p) = \rho_{g_i}(p)$ for any 
$p=p_1\otimes \cdots \otimes p_L \in {\mathcal P}$.
Here $\rho_{g_i}(p)$ is a  
{\em counting function} giving 
the number of certain particles and anti-particles specified by $g_i$  
in the region (\ref{sw}) 
under the time evolutions $p, T_\infty(p), T^2_\infty(p),\ldots$.
As such, the counting functions are non-local variables attached to a
quadrant of the 2 dimensional lattice.
However, it will also be shown in Theorem \ref{pr:rho} that the combined data 
$\{\rho_{g_i}(p_1\otimes \cdots \otimes p_j)\mid j=k-1,k\}$   
in turn reproduces the local variable $p_k \in B_{l_k}$ completely
in agreement with the spirit of the corner transfer matrix method.
Therefore the joint spectrum 
$\{{\mathcal E}_{g_i}(p_1\otimes \cdots \otimes p_k)\}$ 
of the generalized energies with $1 \le k \le L$ is 
equivalent to $p=p_1\otimes \cdots \otimes p_L\in {\mathcal P}$ itself.
This extends a similar result on type $A^{(1)}_n$
(Proposition 4.6 in Ref. \cite{KSY})
which is related to the katabolism \cite{Sh}. 
A supplementary result (Proposition \ref{pr:main2}) is parallel with 
Theorem \ref{th:main} and treats generalized local energies with
opposite chirality (cf. Remark \ref{re:chiral}). 

The layout of the paper is as follows.
In Section \ref{sec:ge}, generalized (local) energies are extracted 
from the piecewise linear formula of the combinatorial $R$ \cite{KOTY1}.
In Section \ref{sec:sca}, the integrable $D^{(1)}_n$ 
cellular automaton \cite{HKT1,HKT2} is recalled     
and the counting functions are defined.
In Section \ref{sec:main}, 
the main Theorem \ref{th:main} of the paper is stated and proved.
In Section \ref{sec:re},  aspects related to 
combinatorial Bethe ansatz are discussed.
In Section \ref{ss:ist} we give 
the inverse scattering formalism 
of the $D^{(1)}_n$ cellular automaton like Ref. \cite{KOSTY}.
In Section \ref{ss:tau},
we conjecture piecewise linear formulas for 
some generalized energies in terms of {\em ultradiscrete tau functions}.
This is also motivated by the $A^{(1)}_n$ case \cite{KSY},
where analogous results have led to a piecewise linear formula for the 
Kerov-Kirillov-Reshetikhin map \cite{KR}.
Although the conjecture is yet to cover the full family of 
generalized energies, the last one (\ref{cj5}) is already rather intriguing.
We expect that the extention and the solution of Conjecture \ref{cj:tau}
will uncover an interplay among combinatorial Bethe ansatz, 
ultradiscretization of the DKP hierarchy \cite{JM} and the bilinearlization of 
the tropical $R$ \cite{KOTY2}. 

\section{Generalized energies for $D^{(1)}_n$ crystal}\label{sec:ge}

\subsection{Crystals and combinatorial $R$}
Let us recall the basic facts on crystal and combinatorial $R$ briefly.
For a more information, see Refs. \cite{KMN1, KMN2, KKM} and \cite{O}.
For a positive integer $l$, let 
\begin{equation}\label{Bl}
B_l = \{ \zeta=(\zeta_1,\ldots,\zeta_n,\overline{\zeta}_n,\ldots,
\overline{\zeta}_1) \in \Z_{\ge 0}^{2n} \mid 
\sum_{i=1}^n(\zeta_i + \overline{\zeta}_i) = l, \; \;
\zeta_n\overline{\zeta}_n = 0 \}
\end{equation}
be the crystal of the 
$l$-fold symmetric tensor representation of 
$U_q(D^{(1)}_n)$ \cite{KKM}.
We assume $n \ge 3$.
As for the functions $\varepsilon_i, \varphi_i$, 
the tensor product rule and the action of Kashiwara operators 
$\tilde{e}_i$ and $\tilde{f}_i\, (0 \le i \le n)$, see Ref. \cite{O}.

The {\em affinization} of the crystal $B_l$ is defined by
${\rm Aff}(B_l)=\{b[d]\mid d\in\Z,b\in B_l\}$ with the crystal
structure $\tilde{e}_i(b[d])=(\tilde{e}_ib)[d+\delta_{i0}]$ and 
$\tilde{f}_i(b[d])=
(\tilde{f}_ib)[d-\delta_{i0}]$. 
We call $b$ and $d$ the 
classical and the affine part of $b[d]$, respectively.
There exists the unique bijection (crystal isomorphism)
$B_l\otimes  B_m\stackrel{\sim}{\rightarrow}
B_m\otimes B_l$ that commutes with all Kashiwara operators.
It is lifted up to a map 
${\rm Aff}(B_l)\otimes {\rm Aff}(B_m)
\stackrel{\sim}{\rightarrow}{\rm Aff}(B_m)\otimes{\rm Aff}(B_l)$
called the {\em combinatorial $R$}, which has the following form:
\begin{eqnarray*}
R\;:\;{\rm Aff}(B_l)\otimes{\rm Aff}(B_m)
&\longrightarrow&{\rm Aff}(B_m)\otimes{\rm Aff}(B_l)\\
b[d]\otimes b'[d']\;\;&\longmapsto&\;\;
\tilde{b}'[d'+H(b\otimes b')]
\otimes \tilde{b}[d-H(b\otimes b')],
\end{eqnarray*}
where $b\otimes b'\mapsto\tilde{b'}\otimes\tilde{b}$ 
under the isomorphism $B_l\otimes B_m
\stackrel{\sim}{\rightarrow}B_m\otimes B_l$. \footnote{
This classical part of the combinatorial $R$ will also be referred as 
combinatorial $R$ and denoted by 
$R(b\otimes b') = \tilde{b'}\otimes\tilde{b}$.}
The quantity $H(b\otimes b')$ is called the 
{\em local energy} and determined up to a global additive constant by
\[
H(\tilde{e}_i(b\otimes b'))=\left\{
\begin{array}{ll}
H(b\otimes b')+1&\mbox{ if }i=0,\ \varphi_0(b)\geq \varepsilon_0(b'), 
\varphi_0(\tilde{b}')\geq\varepsilon_0(\tilde{b}),\\
H(b\otimes b')-1&\mbox{ if }i=0,\ \varphi_0(b)<\varepsilon_0(b'),
\varphi_0(\tilde{b}')<\varepsilon_0(\tilde{b}),\\
H(b\otimes b')&\mbox{ otherwise}.
\end{array}\right.
\]
The Yang-Baxter equation 
\begin{equation}\label{ybe}
(R\otimes1)(1\otimes R)(R\otimes1)
=(1\otimes R)(R\otimes1)(1\otimes R)
\end{equation}
is satisfied on 
${\rm Aff}(B_l)\otimes{\rm Aff}(B_{m})
\otimes{\rm Aff}(B_{k})$.

\subsection{Generalized local energies}\label{ss:glef}
Let us give an explicit piecewise linear formula 
of the combinatorial $R$ that originates in the 
tropical $R$ for geometric crystals of type $D^{(1)}_n$ \cite{KOTY1}.
First we make a slight variable change.
The set $B_l$ (\ref{Bl}) is in one to one correspondence with 
another set
\begin{eqnarray}
B'_l &=& \{ x=(x_1,\ldots,x_n,\overline{x}_{n-1},\ldots,
\overline{x}_1) \in \Z^{2n-1} \mid 
x_i,\overline{x}_i\ge 0\text{ for }1\le i\le n-1,\nonumber\\
&&\qquad\qquad \quad x_n\ge-\min(x_{n-1},\overline{x}_{n-1}),
\sum_{i=1}^{n-1}(x_i + \overline{x}_i) + x_n = l\}
\label{Bp}
\end{eqnarray}
by the relations 
\begin{eqnarray}
x_i &=& \zeta_i, \;\;  \overline{x}_i 
= \overline{\zeta}_i \;\;  (1 \le i \le n-2),\label{xz4}\\
x_{n-1}&=&\zeta_{n-1}+\overline{\zeta}_{n}, \;
x_{n}=\zeta_{n}-\overline{\zeta}_{n}, \; 
\overline{x}_{n-1}=\overline{\zeta}_{n-1}+\overline{\zeta}_{n},
\label{xz5}\\ 
\zeta_n &=& \max(0,x_n),\; \overline{\zeta}_n=\max(0,-x_n),
\label{xz6}\\
\zeta_{n-1} &=& x_{n-1} + \min(0,x_n),\; 
\overline{\zeta}_{n-1} = \overline{x}_{n-1} + \min(0,x_n).
\label{xz7}
\end{eqnarray}
Note that $x_n$ can be negative.
We naturally use the notations like 
$x[d] \in {\rm Aff}(B'_l)$ and 
$R(x[d]\otimes x'[d'])
= \tilde{x}'[d'+H(x\otimes x')]\otimes 
\tilde{x}[d-H(x\otimes x')]$, etc.
Set
\begin{equation}
\ell(\zeta) = 
\sum_{i=1}^n(\zeta_i + \overline{\zeta}_i) \;\;(\zeta \in B_l),
\quad
\ell(x) = \sum_{i=1}^{n-1}(x_i + \overline{x}_i) + x_n 
\;\;(x \in B'_l),
\end{equation}
so that $\ell(\zeta) = \ell(x)=l$ for 
$\zeta \in B_l$ and $x \in B'_l$.

Let $x=(x_1,\ldots,\xb_1)\in B'_l$ and
$y=(y_1,\ldots,\yb_1)\in B'_m$.
On the pair $(x,y)$ we introduce mutually commuting 
involutions $\sigma_1, \sigma_n$ and $\ast$ by
\begin{eqnarray}
(x,y)^{\sigma_1} &=& (x^{\sigma_1},y^{\sigma_1}), \quad
(x,y)^{\sigma_n} = (x^{\sigma_n},y^{\sigma_n}), \quad
(x,y)^{*} = (y^{*},x^{*}),\label{sss}\\
\sigma_1 &: & x_1 \longleftrightarrow \xb_1, \nonumber\\
\sigma_n &: &x_{n-1} \rightarrow x_{n-1}+x_n, \quad
             \xb_{n-1} \rightarrow \xb_{n-1}+x_n, \quad
             x_n \rightarrow -x_n,\nonumber\\
\ast &: & x_i \longleftrightarrow \xb_i\quad (1 \leq i \leq n-1).
\label{sta}
\end{eqnarray}
The coordinates not included in the above rules are left unchanged.
These involutions are 
naturally defined on $(\xi,\zeta) \in B_l\times B_m$ as well 
by the correspondence (\ref{xz4})-(\ref{xz7}).
For instance, one has 
$(\xi, \zeta)^\ast = (\zeta^\ast, \xi^\ast)$ with 
$\zeta^\ast=(\overline{\zeta}_1,\ldots, \overline{\zeta}_{n-1},
\zeta_n, \overline{\zeta}_n, \zeta_{n-1},\ldots, \zeta_1)$ for 
$\zeta=(\zeta_1,\ldots, \zeta_n, 
\overline{\zeta}_n,\ldots, \overline{\zeta}_1)$.

For any function $g=g(x,y)$, we write
$g^{\sigma_1}=g^{\sigma_1}(x,y) 
= g(x^{\sigma_1}, y^{\sigma_1})$, etc.
Introduce the piecewise linear functions 
$V_i=V_i(x,y)$ and  
$W_i=W_i(x,y)$ for $0 \le i \le n-1$ as follows.
\begin{eqnarray}
&&V_i =  \max
\left( \{ \theta_{i,j},
\theta'_{i,j} | 1 \leq j \leq n-2 \} \cup
\{ \eta_{i,j}, \eta'_{i,j} | 1 \leq j \leq n \} \right),\label{Vi}
\\
&&W_0 = 2V_0,\;\;W_1 = V_0 + V_0^{\sigma_1},\;\; W_{n-1}=V_{n-1}+V_{n-1}^*, 
\label{W1}\\
&&W_i=\max \left( V_i+V_{i-1}^*-y_i, V_{i-1}+V_i^*-\xb_i \right)
+ \min (x_i, \yb_i ),\label{Wi}
\end{eqnarray}
where $2 \le i \le n-2$ in the last line.
The functions $\theta_{i,j}=\theta_{i,j}(x,y), 
\theta'_{i,j}=\theta'_{i,j}(x,y),
\eta_{i,j}=\eta_{i,j}(x,y), \eta'_{i,j}=\eta'_{i,j}(x,y)$
are defined by
\begin{eqnarray*}
\theta_{i,j}(x,y) &=&
\begin{cases}
\ell(x) +{\displaystyle
\sum_{k=j+1}^{i} (\yb_k - \xb_k)} & \mbox{for} \;\;
1 \leq j \leq i, \\
\ell(y) +{\displaystyle
\sum_{k=i+1}^{j} (\xb_k - \yb_k)} & \mbox{for} \;\;i+1 \leq j \leq n-2,
\end{cases}
\\
\theta'_{i,j}(x,y) &=& \ell(x) + \sum_{k=1}^{i} (\yb_k - \xb_k)+
\sum_{k=1}^{j} (y_k - x_k)\;\;
\mbox{for} \; 1\le j \le n-2,
\end{eqnarray*}
\begin{eqnarray*}
\eta_{i,j}(x,y) &=&
\begin{cases}
\ell(x) +{\displaystyle  \sum_{k=j+1}^{i} (\yb_k -\xb_k)  +
\yb_{j}- x_{j}} &\mbox{for} \;\; 1 \leq j \leq i,\\
\ell(y) +{\displaystyle  \sum_{k=i+1}^{j} (\xb_k - \yb_k)  +
\yb_{j}- x_{j}} &\mbox{for} \;\; i+1 \leq j \leq n-1,\\
\ell(y) +{\displaystyle  \sum_{k=i+1}^{n-1} (\xb_k - \yb_k) +
x_n }&\mbox{for} \;\; j=n,
\end{cases}
\\
\eta'_{i,j}(x,y) &=&
\begin{cases}
\ell(x)+ {\displaystyle  \sum_{k=1}^{i} (\yb_k -\xb_k)+
\sum_{k=1}^{j} (y_k -x_k)  + x_{j} -\yb_{j}}\;\;
\mbox{for}\;\;1 \leq j \leq n-1,\\
\ell(x)+ {\displaystyle 
\delta_{i,n-1} \left( \ell(x) - \ell(y) \right)+
 \sum_{k=1}^{i} (\yb_k -\xb_k)+
\sum_{k=1}^{n-1} (y_k -x_k)  -x_{n} }\\
\qquad 
\qquad \qquad \qquad \qquad
\qquad \qquad \qquad \qquad 
\qquad \mbox{for} \;\; j=n.
\end{cases}
\end{eqnarray*}

\begin{theorem}[Ref. \cite{KOTY1}, Theorem 4.28 and Remark 4.29]
\label{th:koty}
The image $y'\otimes x' = R(x\otimes y)$ of the combinatorial R is given by 
\begin{equation}\label{eq:combiR}
\begin{split}
& x_i' = x_i +V_{i-1}^* -V_i^*, \quad
\xb_i' = \xb_i +V_{i-1}^* +W_i -V_i^* -W_{i-1} 
\quad (1 \leq i \leq n-1),\\
& x_n' = x_n +V_{n-1}^* -V_{n-1},\quad y_n' = y_n +V_{n-1} -V_{n-1}^{*},\\
& y_i' = y_i +V_{i-1}+W_i -V_i -W_{i-1}, \quad
\yb_i' = \yb_i +V_{i-1} -V_i \quad (1 \leq i \leq n-1).
\end{split}
\end{equation}
Moreover, the local energy is given by 
\begin{equation}\label{HV}
H(x\otimes y)= V_0(x,y)
\end{equation} 
up to a constant shift.
\end{theorem}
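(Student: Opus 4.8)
The plan is to reduce the statement to the characterizing properties of the combinatorial $R$ and the local energy. Recall that $R : B'_l \otimes B'_m \to B'_m \otimes B'_l$ is the unique bijection commuting with every Kashiwara operator $\tilde{e}_i,\tilde{f}_i$ $(0\le i\le n)$, and that once such an $R$ is fixed, $H(x\otimes y)$ is determined up to an additive constant by the recursion displayed just before (\ref{ybe}). Hence it suffices to (i) show that the map $x\otimes y\mapsto y'\otimes x'$ defined by (\ref{eq:combiR}) is a well-defined bijection $B'_l\otimes B'_m\to B'_m\otimes B'_l$ intertwining all $\tilde{e}_i,\tilde{f}_i$, and (ii) check that $V_0(x,y)$ obeys the $H$-recursion.

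Route via tropicalization, which matches the origin of the formula. I would first lift $B'_l$ to its geometric-crystal analogue $\mathcal{B}_l$, a variety with torus coordinates $(x_1,\dots,x_n,\bar x_{n-1},\dots,\bar x_1)$ carrying a birational $U_q(D^{(1)}_n)$ geometric-crystal structure $\{e_i^c\}$ in the sense of \cite{BK}. On $\mathcal{B}_l\times\mathcal{B}_m$ there is a tropical $R$, the unique birational map commuting with all $e_i^c$; its existence follows by the uniqueness argument of \cite{KOTY1}, and I would compute it explicitly. The computation splits: along the nodes $1,\dots,n-2$ it is governed by type-$A$ pieces, so the relevant rational functions are the geometric analogues of $\theta_{i,j},\theta'_{i,j},\eta_{i,j},\eta'_{i,j}$; the fork $\{n-1,n\}$, together with the involution $\sigma_n$ of (\ref{sta}), requires a separate local analysis producing the geometric counterparts of $W_{n-1},W_1,W_0$ and the $\sigma_n$-twisted terms. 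One then checks that these subtraction-free rational functions solve the intertwining equations for every $e_i^c$. Finally, ultradiscretization ($\times\mapsto+$, $+\mapsto\max$, $\div\mapsto-$) sends these to the $V_i,W_i$ of (\ref{Vi})--(\ref{Wi}), and — by the general principle that a positive geometric crystal ultradiscretizes to a Kashiwara crystal and a positive intertwiner to the crystal isomorphism — the ultradiscretized map commutes with all $\tilde{e}_i,\tilde{f}_i$; by uniqueness it is $R$, hence (\ref{eq:combiR}). The analogous ultradiscretization of the geometric boost function yields $H=V_0$ up to a constant, (\ref{HV}); alternatively one verifies the $H$-recursion directly as below.

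Direct verification (a useful cross-check). \emph{Well-definedness:} summing (\ref{eq:combiR}), the differences $W_i-W_{i-1}$ and $V_i-V_{i-1}$ telescope, so $\ell(y')=\ell(x)$ and $\ell(x')=\ell(y)$; nonnegativity of the first $n-1$ new coordinates and the inequality $x_n'\ge-\min(x_{n-1}',\bar x_{n-1}')$ reduce to bounds of the form $V_i-V_{i-1}\ge-(\text{coordinate})$ and $V_i\le\ell$, read off from the $\max$-expression (\ref{Vi}) by exhibiting the term that attains the bound. \emph{Intertwining:} for $2\le i\le n-2$ one restricts to the pair of coordinates on which $\tilde{e}_i$ acts and recognizes the type-$A$ combinatorial $R$; for $i=n-1,n$ one uses the signature rule together with the $\sigma_n$-twist; for $i=0$ one exploits the Dynkin-diagram automorphism exchanging nodes $0$ and $1$ and the manifest $\sigma_1$-covariance built into $W_1=V_0+V_0^{\sigma_1}$ (see (\ref{W1})), reducing the $i=0$ case to the $i=1$ case. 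Bijectivity follows since the same formulas, read with $(x,y)$ and $(x',y')$ interchanged and $\sigma_1,\ast$ applied, produce the inverse. \emph{Energy:} one checks $V_0(\tilde{e}_0(x\otimes y))-V_0(x\otimes y)\in\{1,0,-1\}$ according to the $\varphi_0/\varepsilon_0$ conditions, again transporting the $i=0$ analysis to node $1$ via $\sigma_1$ and tracking which term of the $\max$ in (\ref{Vi}) is active; this pins $V_0$ down as $H$ up to the unavoidable additive constant.

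The main obstacle, in either route, is the fork of the Dynkin diagram. In the tropical route it is the explicit solution of the intertwining equations for $e_{n-1}^c,e_n^c$ and the verification that the $\sigma_n$-twisted rational expressions remain consistent, after ultradiscretization, with the constraint $x_n\ge-\min(x_{n-1},\bar x_{n-1})$ defining $B'_l$ in (\ref{Bp}). In the direct route it is the proliferation of cases when checking commutation with $\tilde{e}_{n-1},\tilde{f}_{n-1},\tilde{e}_n,\tilde{f}_n$ on the piecewise-linear formula, since the $\max$ and $\min$ in (\ref{Vi})--(\ref{Wi}) must be resolved region by region. By comparison the affine node $i=0$ is easy, because the $\sigma_1$-symmetry of the formulas trivializes it.
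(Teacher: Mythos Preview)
The paper does not prove this theorem; it is quoted from Ref.~\cite{KOTY1} (Theorem 4.28 and Remark 4.29 there), as the header indicates, so there is no in-paper proof to compare against. Your tropicalization route is exactly the method of the cited source: \cite{KOTY1} constructs the tropical $R$ for the $D^{(1)}_n$ geometric crystal as a subtraction-free birational intertwiner and then ultradiscretizes it, which is what you outline. Your direct-verification route is a legitimate alternative cross-check, with the expected cost being the case analysis at the fork that you correctly flag as the bottleneck.

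One slip in the well-definedness step: you write $\ell(y')=\ell(x)$ and $\ell(x')=\ell(y)$, but since $R:B'_l\otimes B'_m\to B'_m\otimes B'_l$ sends $x\otimes y$ to $y'\otimes x'$ with $y'\in B'_m$ and $x'\in B'_l$, the correct identities are $\ell(x')=\ell(x)=l$ and $\ell(y')=\ell(y)=m$. The telescoping does give this once you use $V_0^\ast=V_0$, $W_0=2V_0$ and $W_{n-1}=V_{n-1}+V_{n-1}^\ast$ from (\ref{W1}); only the stated target levels were swapped.
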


The functions $V_1,\ldots, V_{n-1}, W_1, \ldots, W_{n-1}$
and $\sigma_1, \sigma_n$ and $\ast$ of them are relatives of 
the local energy.
In addition to the involutions 
$\sigma_1, \sigma_n$ and $\ast$, the combinatorial $R$ 
naturally acts on them by  
$(RV_0)(x,y) = V_0(y',x')$ with 
$y'\otimes x' = R(x\otimes y)$, etc.
Their transformation properties  
under $\sigma_1, \sigma_n, \ast$ and $R$ 
are summarized in Table \ref{tab:one} \cite{KOTY1}.
These involutions are commutative, thus for instance
$R(V^{\sigma_1}_0) = (R(V_0))^{\sigma_1} = V^{\sigma_1}_0$.

\begin{table}
	\caption{Transformation by 
$\sigma_1, \sigma_n, \ast$ and $R$.}
{\begin{tabular}[h]{|c|c|c|c|c|}
		\hline
		& $V_0$ & $V_i \, (1 \leq i \leq n-2)$ & $V_{n-1}$ & 
		$W_i \, (1 \leq i \leq n-1)$ \\
		\hline
	$\sigma_1$& $V_0^{\sigma_1}$ & $V_i$ & $V_{n-1}$ & $W_i$ \\
	$\sigma_n$& $V_0$ & $V_i$ & $V_{n-1}^*$ & $W_i$ \\
	$\ast$ & $V_0$ & $V_i^*$ & $V_{n-1}^*$ & 
	$W_i$ \\
	$R$ & $V_0$ & $W_i-V_i^*$& $V_{n-1}$& $W_i$ \\
		\hline
\end{tabular}}\label{tab:one}
\end{table}

\noindent
Due to these properties, 
there are a few simplifications in (\ref{eq:combiR}) as
\begin{eqnarray}
x_1' &=& x_1 +V_0 -V_1^*, \quad
\xb_1' = \xb_1 +V_0^{\sigma_1} -V_1^*, \nonumber\\
y_1' &=& y_1 +V_0^{\sigma_1} -V_1, \quad
\yb_1' = \yb_1 +V_0 -V_1. \label{yvv}
\end{eqnarray}

We write
\begin{equation}\label{ul}
u_l = (l,0,\ldots,0) \in B_l.
\end{equation}
By using Theorem \ref{th:koty}, 
one can show for any $\zeta \in B_m$ that
\begin{equation}\label{buu}
B_l \otimes B_m \ni u_l \otimes \zeta\;
\stackrel{\sim}{\mapsto}\;
u_m \otimes \xi' \in B_m \otimes B_l \;\;\text{if } l \ge m
\end{equation} 
for some $\xi'$ 
under the combinatorial $R$.
In particular 
\begin{equation*}
u_l \otimes u_m \simeq u_m \otimes u_l
\end{equation*}
holds.
The functions in Table \ref{tab:one} attain their maximum 
$V_0 = V_0^{\sigma_1} = V_i = V_i^* = l + m$ and
$W_i = 2(l+m)$ for $1 \leq i \leq n-1$ at 
$(x,y) = (u_l, u_m)$.

For $\xi \otimes \zeta \in B_l\otimes B_m$,
let $x\in B'_l$ and $y \in B'_m$ be the elements 
corresponding to $\xi$ and $\zeta$, respectively. 
We set 
\begin{eqnarray}
v_i(\xi\otimes \zeta)&=&\ell(\xi)+\ell(\zeta)
-V_i(x,y)\;\;(0 \le i \le n-1)\label{vi}\\
v^{\sigma_1}_0(\xi\otimes \zeta)&=&\ell(\xi)+\ell(\zeta)
-V^{\sigma_1}_0(x,y),\label{v01}\\
v^\ast_i(\xi\otimes \zeta)&=&\ell(\xi)+\ell(\zeta)-V^\ast_i(x,y)\;\;
(1 \le i \le n-1),\label{vis}\\
w_i(\xi\otimes \zeta) &=& 2\ell(\xi)+2\ell(\zeta)-W_i(x,y)\;\;
(1 \le i \le n-1),\label{wi}
\end{eqnarray}
and call them {\em generalized local energies}. 
Note that $w_{n-1}-v_{n-1}=v^{\ast}_{n-1}$.
They are building blocks of the piecewise linear 
formula of the combinatorial $R$ (\ref{eq:combiR}).
{}From the above remark, generalized local energies are all nonnegative and 
normalized so that 
\begin{equation}\label{zuu}
g(u_l\otimes u_m) = 0 \;\;
\text{for any } \;
g = v_i, v_0^{\sigma_1}, v_i^\ast \;\text{and}\;w_i.
\end{equation}

For $\zeta=(\zeta_1,\ldots,\zeta_n,\overline{\zeta}_n,\ldots,
\overline{\zeta}_1) \in B_l$, we introduce
\begin{eqnarray}
{\mathfrak a}(\zeta) &=& \zeta_2+\cdots \zeta_n+
\overline{\zeta}_n+\cdots +\overline{\zeta}_{2}
+2\overline{\zeta}_1
= \ell(\zeta)+\overline{\zeta}_1-\zeta_1, \label{all}\\
\gamma_{v_a}(\zeta) &=& \zeta_2+\cdots \zeta_n+
\overline{\zeta}_n+\cdots +\overline{\zeta}_{a+1}
+\overline{\zeta}_1
\;\;(0 \le a \le n-2),\label{gva}\\
\gamma_{v_{n-1}}(\zeta) &=& \zeta_2+\cdots 
+\zeta_{n-1} + \zeta_n+
\overline{\zeta}_1,\label{gvn1}\\
\gamma_{v^\ast_{n-1}}(\zeta) &=& \zeta_2+\cdots +\zeta_{n-1}
+\overline{\zeta}_n+
\overline{\zeta}_1,\label{gvn2}\\
\gamma_{w_a-v_a}(\zeta) &=&
\zeta_2+\cdots + \zeta_a+\overline{\zeta}_1\;\;(1 \le a \le n-2),
\label{gwv}\\
\gamma_{v^{\sigma_1}_0}(\zeta) &=& 0.\label{gvs}
\end{eqnarray}
Note that  $\gamma_{v_0}(\zeta) = \mathfrak{a}(\zeta)$.

\begin{lemma}\label{le:ato}
Let $\xi=(\xi_1,\ldots, \overline{\xi}_1) \in B_l$
and $\zeta=(\zeta_1,\ldots, \overline{\zeta}_1) \in B_m$.
Set $\zeta'\otimes \xi' = R(\xi\otimes \zeta)
\in B_m \otimes B_l$.
For $\xi_1$ (hence $l$ as well) sufficiently large, 
the relation
\begin{equation*}
g(\xi\otimes \zeta) = \gamma_g(\zeta)+\mathfrak{a}(\zeta')
-\gamma_g(\zeta')
\end{equation*}
is valid for $g$ appearing in (\ref{gva})--(\ref{gvs}),
where the left hand side with  
$g= w_a-v_a$ is to be understood as 
$w_a(\xi\otimes \zeta)-v_a(\xi\otimes \zeta)$. 
\end{lemma}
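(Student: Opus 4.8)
Write $x\in B'_l$ and $y\in B'_m$ for the $B'$-representatives of $\xi$ and $\zeta$, and $y'\otimes x'=R(x\otimes y)$ for that of $\zeta'\otimes\xi'=R(\xi\otimes\zeta)$. The plan is to show that, once one invokes the combinatorial $R$ formula of Theorem~\ref{th:koty} and the dictionary (\ref{xz4})--(\ref{xz7}), every instance of the asserted identity is equivalent to the single ``stabilization'' statement
\begin{equation*}
V_0(x,y)=\ell(x)+y_1-\overline{y}_1\qquad\text{when }\xi_1\text{ is large,}
\end{equation*}
and then to prove this statement by inspecting the piecewise linear formula (\ref{Vi}) for $V_0$.

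For the first part I would rewrite each $\gamma_g(\zeta)$ in the $B'$-coordinates. Using (\ref{xz6})--(\ref{xz7}) and $\min(0,t)+\max(0,t)=t$, $\max(0,-t)=-\min(0,t)$ one gets clean expressions such as $\gamma_{v_a}(\zeta)=\overline{y}_1+y_2+\cdots+y_{n-1}+y_n+\overline{y}_{a+1}+\cdots+\overline{y}_{n-1}$ $(1\le a\le n-2)$, $\gamma_{v_{n-1}}(\zeta)=\overline{y}_1+y_2+\cdots+y_{n-1}+y_n$, $\gamma_{v^{\ast}_{n-1}}(\zeta)=\overline{y}_1+y_2+\cdots+y_{n-1}$, $\gamma_{w_a-v_a}(\zeta)=\overline{y}_1+y_2+\cdots+y_a$ and $\gamma_{v^{\sigma_1}_0}=0$. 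These are arranged so that along the two chains $v_0\to v_1\to\cdots\to v_{n-2}\to v_{n-1}\to v^{\ast}_{n-1}$ and $w_1-v_1\to\cdots\to w_{n-2}-v_{n-2}$ consecutive $\gamma$'s differ, as functions of the $B'$-coordinates, by a single coordinate, namely $\overline{y}_a$, then $y_n$, then $y_a$ respectively. Subtracting the asserted identity for one member of a chain from that for the next, and using $v_i=\ell(\xi)+\ell(\zeta)-V_i$, $v^{\ast}_{n-1}=\ell(\xi)+\ell(\zeta)-V^{\ast}_{n-1}$, $w_i=2\ell(\xi)+2\ell(\zeta)-W_i$ together with $\overline{\zeta}_i=\overline{y}_i$, $\overline{\zeta}'_i=\overline{y}'_i$ for $i\le n-2$ and (\ref{xz6})--(\ref{xz7}) at the indices $n-1,n$, the difference collapses to exactly one of the relations $\overline{y}'_i=\overline{y}_i+V_{i-1}-V_i$, $y'_n=y_n+V_{n-1}-V^{\ast}_{n-1}$, $y'_i=y_i+V_{i-1}+W_i-V_i-W_{i-1}$ of Theorem~\ref{th:koty}, which hold unconditionally. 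Hence the identity for any chain member is equivalent to the one for its first member, so it suffices to treat the three ``seeds'' $g=v_0$, $g=w_1-v_1$, $g=v^{\sigma_1}_0$.

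For these I would argue as follows. For $v_0$ the right side telescopes to $\mathfrak{a}(\zeta)$ (since $\gamma_{v_0}=\mathfrak{a}$), so the identity reads $\ell(\xi)+\ell(\zeta)-V_0=\ell(\zeta)+\overline{y}_1-y_1$, i.e.\ precisely $V_0=\ell(x)+y_1-\overline{y}_1$. For $v^{\sigma_1}_0$ (where $\gamma_{v^{\sigma_1}_0}=0$) the identity is $\ell(\xi)+\ell(\zeta)-V^{\sigma_1}_0=\mathfrak{a}(\zeta')=\ell(\zeta)+\overline{y}'_1-y'_1$; substituting $\overline{y}'_1=\overline{y}_1+V_0-V_1$ and $y'_1=y_1+V^{\sigma_1}_0-V_1$ from Theorem~\ref{th:koty} and (\ref{yvv}) makes $V_1$ and $V^{\sigma_1}_0$ cancel, leaving $V_0=\ell(x)+y_1-\overline{y}_1$. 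For $w_1-v_1$, using $W_1=V_0+V^{\sigma_1}_0$ from (\ref{W1}), $\gamma_{w_1-v_1}(\zeta)=\overline{y}_1$, $\gamma_{w_1-v_1}(\zeta')=\overline{y}'_1$ and the same substitutions, $V_1$ and $V^{\sigma_1}_0$ again cancel and the identity once more becomes $V_0=\ell(x)+y_1-\overline{y}_1$. To finish, I would examine the arguments of the maximum (\ref{Vi}) with $i=0$: the functions $\theta_{0,j}$, $\eta_{0,n}$, $\eta_{0,j}$ $(j\ge 2)$ do not involve $x_1$; $\eta_{0,1}=\ell(y)+\overline{x}_1-x_1$ has $x_1$-coefficient $-1$; each $\theta'_{0,j}$, $\eta'_{0,j}$ $(j\ge 2)$, $\eta'_{0,n}$ contains a block $\sum_{k=1}^{j}(y_k-x_k)$ whose $-x_1$ cancels the $+x_1$ of $\ell(x)$, so has $x_1$-coefficient $0$; only $\eta'_{0,1}(x,y)=\ell(x)+y_1-\overline{y}_1$ has $x_1$-coefficient $+1$. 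Thus, with all other coordinates of $\xi$ and all of $\zeta$ fixed, $\eta'_{0,1}\to\infty$ while every other argument is bounded above, so $V_0=\eta'_{0,1}$ once $\xi_1$ passes a threshold determined by the fixed data; this establishes the equality and, with the preceding reduction, the lemma.

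I expect the one genuinely laborious point to be the $B\leftrightarrow B'$ bookkeeping at the fork indices $n-1,n$ in the reduction, i.e.\ verifying that the chain differences really coincide with the stated relations of Theorem~\ref{th:koty}; one should also check that the bound on the non-dominant arguments of $V_0$ in the last step is uniform over the fixed coordinates, so that the threshold on $\xi_1$ exists. Everything else, in particular the chain reduction itself, is purely formal.
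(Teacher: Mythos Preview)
Your proof is correct and follows essentially the same route as the paper: first establish $V_0=\eta'_{0,1}=\ell(x)+y_1-\overline{y}_1$ for $\xi_1$ large, then deduce all the remaining cases from this together with the combinatorial $R$ formula (\ref{eq:combiR}) and the change of variables (\ref{xz4})--(\ref{xz7}). Your chain reduction and seed analysis spell out explicitly what the paper summarizes in one sentence, and your coefficient-of-$x_1$ argument replaces the paper's explicit threshold $\xi_1\ge m-\zeta_1+\overline{\zeta}_1$; both are valid ways to the same conclusion.
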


\begin{proof}
One can check that $\xi_1\ge m-\zeta_1+\overline{\zeta}_1$
is sufficient to guarantee that $V_0$ (\ref{Vi}) is equal to 
$\eta'_{0,1}= l +\zeta_1-\overline{\zeta}_1$.
This implies that 
$v_0(\xi\otimes \zeta) = m-\zeta_1+\overline{\zeta}_1
=\gamma_{v_0}(\zeta)$
showing the $g=v_0$ case.
All the other cases are deduced from this, 
(\ref{eq:combiR}) and (\ref{xz4})--(\ref{xz7})
without using the concrete forms of 
$\theta_{i,j}, \theta'_{i,j}, \eta_{i,j}$ and $\eta'_{i,j}$.
\end{proof}

For $g=w_a\, (1 \le a \le n-2)$, an analogue of 
Lemma \ref{le:ato} holds with 
\begin{eqnarray}
w_a(\xi\otimes \zeta) 
&=& \gamma_{w_a}(\zeta)+2\mathfrak{a}(\zeta')
-\gamma_{w_a}(\zeta'),\nonumber\\
\gamma_{w_a}(\zeta) &=& \gamma_{w_a-v_a}(\zeta)
+\gamma_{v_a}(\zeta) \nonumber\\
&=& \mathfrak{a}(\zeta) + 
\zeta_2+\cdots + \zeta_a 
-(\overline{\zeta}_a+\cdots + \overline{\zeta}_2).
\label{wga2}
\end{eqnarray}

As $\xi_1$ gets large, $\zeta'$ stabilizes since 
it is a piecewise linear function of $\xi_1$ staying in a finite set $B_m$.  
($\xi_1\ge m$ seems sufficient for the convergence.)
Therefore Lemma \ref{le:ato} ensures that all the generalized local energies 
$g(\xi\otimes \zeta)$ 
are well defined in the limit $\xi_1 \rightarrow \infty$.

\subsection{Generalized energies}\label{ss:gef}

For $p=p_1 \otimes \cdots \otimes p_L
\in B_{l_1}\otimes\cdots\otimes B_{l_L}$,
define $p^{(i)}_j \in B_{l_j}$ ($i<j$) by
\begin{eqnarray}\label{pij}
(B_{l_i}\otimes\cdots\otimes B_{l_{j-1}})\otimes B_{l_j}\;&\simarrow&
\;\;B_{l_j}\otimes (B_{l_i}\otimes\cdots\otimes B_{l_{j-1}})\nonumber\\
p_{i}\otimes\cdots\otimes p_{j-1}\otimes p_{j}
\;\;&\mapsto&\;\;p^{(i)}_j\otimes p'_i\otimes\cdots\otimes p'_{j-1},
\end{eqnarray}
sending $p_j$ to the left by successive applications of 
the combinatorial $R$.
We set $p^{(j)}_j = p_j$.
For any generalized local energy 
in (\ref{vi})--(\ref{wi}), we define the 
{\em generalized energy} of 
$p=p_1 \otimes \cdots \otimes p_L
\in B_{l_1}\otimes\cdots\otimes B_{l_L}$ by 
\begin{equation}\label{efp}
{\mathcal E}_g(p) = \sum_{0 \le i < j \le L}
g(p_i\otimes p^{(i+1)}_j)
\end{equation}
by taking $p_0 = u_l$ with sufficiently large $l$.
This is well defined (finite) due to Lemma \ref{le:ato}
and the comment following it. 
In the rest of the paper, we will simply write 
$p_0=u_\infty \in B_\infty$.

When $g=v_0$, (\ref{efp}) is the energy introduced in 
Refs. \cite{NY} and \cite{HKOTY} up to a sign and a constant shift.
If furthermore 
$l_1,\ldots, l_L$ are all equal, then $p^{(i+1)}_j = p_{i+1}$ holds 
and (\ref{efp}) reduces to 
\begin{equation*}
{\mathcal E}_{v_0}(p) = \sum_{0 \le  i < L}
(L-i)\,v_0(p_i\otimes p_{i+1}).
\end{equation*}
Its generating function 
$\sum_p q^{{\mathcal E}_{v_0}(p)}$ 
is a version of the one dimensional configuration sum
going back to Refs. \cite{Ba} and \cite{ABF}, which is the essential 
ingredient in the corner transfer matrix method. 

Any quantity $G(p_\alpha\otimes 
p_{\alpha+1}\otimes \cdots \otimes p_\beta)$
will be said {\em $R$-invariant} if 
$G(\cdots \otimes p_i\otimes p_{i+1}\otimes \cdots) = 
G(\cdots \otimes R(p_i\otimes p_{i+1})\otimes \cdots)$
for any $\alpha\le i <\beta$.

\begin{remark}\label{re:rinv}
Due to the transformation property 
under $R$ in Table \ref{tab:one}, 
the generalized energy 
${\mathcal E}_g(p)$ is $R$-invariant 
for $g = v_0, v^{\sigma_1}_0, v_{n-1}, v^\ast_{n-1}$
and $w_1,\ldots, w_{n-1}$.
On the other hand, 
${\mathcal E}_g$ with 
$g=v_1,\ldots, v_{n-2}$ and 
$v^\ast_1,\ldots, v^\ast_{n-2}$ are {\em not} $R$-invariant.
\end{remark}

Let us depict the relation $R(b\otimes c) = \tilde{c}\otimes\tilde{b}$ as
\vspace{0.2cm}
\begin{equation*}
\begin{picture}(20,30)(-13,-14)


\put(0,0){\line(1,-1){10}} \put(14,-18){${\tilde b}$}
\put(0,0){\line(1,1){10}} \put(12,12){${c}$}
\put(0,0){\line(-1,-1){10}} \put(-17,-18){${\tilde c}$}
\put(0,0){\line(-1,1){10}} \put(-16,12){$b$}

\end{picture}
\end{equation*}
Then the Yang-Baxter equation (\ref{ybe}) takes the 
well known form:
\begin{equation*}
\begin{picture}(80,40)(0,-2)
\put(0,0){\line(1,1){30}}
\put(30,0){\line(-1,1){30}}

\put(15,30){\line(-1,-1){12}}
\qbezier(3,18)(0,15)(3,12)
\put(15,0){\line(-1,1){12}}

\put(40,15){$=$}

\multiput(60,0)(0,0){1}{
\put(0,0){\line(1,1){30}}
\put(30,0){\line(-1,1){30}}

\put(15,30){\line(1,-1){12}}
\qbezier(27,18)(30,15)(27,12)
\put(15,0){\line(1,1){12}}
}

\end{picture}
\end{equation*}
The defining relation (\ref{pij}) of $p^{(i)}_j$ 
looks as 
\begin{equation}\label{pene}
\begin{picture}(100,70)(-50,-17)
\put(-5,-5){\line(1,1){45}}
\put(23,40){\line(1,-1){17}}
\put(12,29){\line(1,-1){17}}
\multiput(4,10)(2,2){3}{\put(0,0){$\cdot$}}
\put(-5,12){\line(1,-1){17}}
\put(8,45){$p_{j-1}$}\put(41,45){$p_j$}
\put(-8,33){$p_{j-2}$}
\put(-15,15){$p_i$}
\put(-17,-18){$p^{(i)}_j$}
\end{picture}
\end{equation}
Remember that each vertex is associated with various
generalized local energies $g(b\otimes c)$. 
Let
\begin{equation}\label{Ig}
I_g=I_g(p_i\otimes \cdots \otimes p_{j-1}\otimes p_j)
=\sum_{i\le k < j}g(p_k\otimes p^{(k+1)}_j)
\end{equation}
be the sum of generalized 
local energy $g$ over all the vertices in (\ref{pene}).
Then the generalized energy (\ref{efp}) is expressed as
\begin{eqnarray}\label{eig}
{\mathcal E}_g(p_1\otimes \cdots \otimes p_L)
&=& {\mathcal E}_g(p_1\otimes \cdots \otimes p_{L-1})
+I_g(u_\infty\otimes p_1\otimes \cdots \otimes p_L)\nonumber\\
&=& \sum_{1 \le j \le L}
I_g(u_\infty\otimes p_1\otimes \cdots \otimes p_{j-1}\otimes p_j).
\end{eqnarray}
{}From (\ref{buu}) and (\ref{zuu}),  it follows that
\begin{eqnarray}
I_g(u_\infty\otimes u_\infty 
\otimes p_1\otimes \cdots \otimes p_j)
&=& I_g(u_\infty\otimes p_1\otimes \cdots \otimes p_j),
\label{uuI}\\
{\mathcal E}_g(u_\infty\otimes p_1\otimes \cdots \otimes p_L)
&=& {\mathcal E}_g(p_1\otimes \cdots \otimes p_L).
\label{uE}
\end{eqnarray}

\begin{lemma}\label{le:chiral}
In (\ref{pene}), the following quantities 
are $R$-invariant as 
the functions of 
$p_i \otimes \cdots \otimes p_{j-1}$.
(i) The element $p^{(i)}_j$.
(ii) $I_g$ (\ref{Ig}) for any 
$g=v_a\,(0\le a \le n-1), \,w_a-v_a\,(1 \le a \le n-2)$,
$v^\ast_{n-1}$ and $v^{\sigma_1}_0$.
\end{lemma}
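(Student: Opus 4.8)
\medskip
\noindent\textbf{Proof proposal.}
I would reduce both parts to the effect of a single combinatorial $R$ applied to one adjacent pair $p_k\otimes p_{k+1}$ with $i\le k\le j-2$, since such moves generate all $R$-moves on the factors $p_i,\dots,p_{j-1}$ (an $R$-move on $(p_{j-1},p_j)$ is not allowed here). Fix such a $k$ and write $M=p^{(k+2)}_j\in B_{l_j}$ for the state on the mover strand of (\ref{pene}) just before it meets $p_{k+1}$; $M$ is untouched by an $R$-move at $(p_k,p_{k+1})$. On the three factors $p_k\otimes p_{k+1}\otimes M$, inserting $R$ at $(p_k,p_{k+1})$ and then passing the mover to the left is the composite $(R\otimes1)(1\otimes R)(R\otimes1)$, whereas the original staircase acts by $(1\otimes R)(R\otimes1)$. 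The Yang--Baxter equation (\ref{ybe}) is the engine: writing it as $(R\otimes1)(1\otimes R)(R\otimes1)=(1\otimes R)(R\otimes1)(1\otimes R)$, the right-hand composite first performs the original two-step staircase $(R\otimes1)(1\otimes R)$ and then an extra $R$ on the two outgoing factors $p'_k,p'_{k+1}$, which never touches the mover strand. Hence the first output factor — including, on $\mathrm{Aff}$, its affine degree — is the same with or without the inserted $R$. This already proves (i): $p^{(m)}_j$ is unchanged for all $m\le k$, in particular $p^{(i)}_j$.

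Next I would extract the change in $I_g$. Since (i) shows that nothing away from positions $k,k+1$ moves, one gets
\begin{equation*}
I^{\mathrm{new}}_g-I^{\mathrm{old}}_g=\bigl[g(\tilde p_k\otimes M)+g(\tilde p_{k+1}\otimes N)\bigr]-\bigl[g(p_{k+1}\otimes M)+g(p_k\otimes M')\bigr],
\end{equation*}
where $R(p_k\otimes p_{k+1})=\tilde p_{k+1}\otimes\tilde p_k$, $R(p_{k+1}\otimes M)=M'\otimes p'_{k+1}$ (so $M'=p^{(k+1)}_j$), and $R(\tilde p_k\otimes M)=N\otimes p''_k$. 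Thus (ii) is equivalent, for each listed $g$, to the hexagon identity $g(\tilde p_k\otimes M)+g(\tilde p_{k+1}\otimes N)=g(p_{k+1}\otimes M)+g(p_k\otimes M')$. For $g=v_0$ this follows from (\ref{ybe}) on the affinizations: with all affine degrees initially $0$, the mover's final degree after it moves past $p_k\otimes p_{k+1}$ is the sum of the two local energies $H=V_0$ along its path, and by the previous paragraph this degree is unchanged by the inserted $R$; since $v_0=\ell(\xi)+\ell(\zeta)-V_0$ up to a constant depending only on the (unchanged) multiset $\{l_k,l_{k+1},l_j\}$, the hexagon identity for $v_0$ — hence $R$-invariance of $I_{v_0}$ — follows.

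For the remaining functions $v_a,\,w_a-v_a\ (1\le a\le n-2)$, $v_{n-1}$, $v^{\ast}_{n-1}$ and $v^{\sigma_1}_0$ the scheme is meant to run the same way, once each of them is realized as a local energy for a suitable affinization — a ``principal-picture'' grading of $B_l$ — for which (\ref{ybe}) still holds; equivalently, these hexagon identities are the additive cocycle relations that the building-block functions of the tropical $R$ must satisfy in order for the tropical $R$ to obey Yang--Baxter, which is the setting of \cite{KOTY1,KOTY2}. Failing an explicit such realization, the identities can be checked directly from the piecewise-linear formulas (\ref{Vi})--(\ref{Wi}) and (\ref{eq:combiR}). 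I expect this to be the one genuine obstacle, and truly so only for $v_a$ and $w_a-v_a$ with $1\le a\le n-2$: by Table \ref{tab:one} these — unlike $v_0$, $v_{n-1}$, $v^{\ast}_{n-1}$, $v^{\sigma_1}_0$ and all the $w_i$ — are not $R$-invariant as functions of two tensor factors, so no one-line argument through a single $V$-type function is available and one must either exhibit the right grading or push through the piecewise-linear bookkeeping. It is precisely the chirality built into the list (cf.\ Remark \ref{re:chiral}) that makes these identities hold while their mirror partners $v^{\ast}_a\ (1\le a\le n-2)$ fail.
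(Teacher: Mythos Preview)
Your reduction to a single adjacent $R$-move and your use of the Yang--Baxter equation for (i) and for $g=v_0$ in (ii) are correct and match the paper. The gap is in the remaining $g$'s: you propose either realizing each as a local energy for some principal-type affinization or checking the hexagon identities directly from (\ref{Vi})--(\ref{Wi}), but you carry out neither, and you offer no evidence that the first route exists. As written, the proof is complete only for $v_0$.

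The paper bypasses both of your proposed routes with a short telescoping argument you missed: track the \emph{coordinates of the mover}. Let $y,y'\in B'_{l_j}$ correspond to $p_j$ and $p^{(i)}_j$. By (\ref{eq:combiR}) and (\ref{yvv}), at each vertex of (\ref{pene}) the mover's $\overline{y}_a$-coordinate changes by $V_{a-1}-V_a$, its $y_a$-coordinate by $V_{a-1}+W_a-V_a-W_{a-1}$, and $y_n$ by $V_{n-1}-V^\ast_{n-1}$. Summing along the staircase and converting via (\ref{vi})--(\ref{wi}) gives, for instance, $\overline{y}'_1-\overline{y}_1=I_{v_1}-I_{v_0}$. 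The left side depends only on $p_j$ and $p^{(i)}_j$, both $R$-invariant by (i); since $I_{v_0}$ is already known to be $R$-invariant, so is $I_{v_1}$. Iterating with the other coordinate differences bootstraps to all the listed $I_g$'s in a few lines, with no hexagon identities to verify. This also makes the chirality of Remark~\ref{re:chiral} transparent: in (\ref{eq:combiR}) the differences $y'-y$ of the mover involve $V_a$ but not $V^\ast_a$ for $1\le a\le n-2$, so $I_{v^\ast_a}$ cannot be reached this way.
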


\begin{proof}
(i) This is due to the classical part of the 
Yang-Baxter equation (\ref{ybe}).
(ii) The $R$-invariance of $I_{v_0}$
follows from (\ref{vi}), (\ref{HV}) and the 
affine part of the Yang-Baxter equation.
Let $y, y' \in B'_{l_j}$ be the elements
corresponding to $p_j, p^{(i)}_j \in B_{l_j}$, respectively.
{}From (\ref{yvv}), we have
$\overline{y}'_1-\overline{y}_1=I_{v_1}-I_{v_0}$.
Since the left hand side is $R$-invariant by (i), 
this relation implies the $R$-invariance of $I_{v_1}$. 
By similarly using the $R$-invariance of 
$\overline{y}'_i-\overline{y}_i$ and $y'_i-y_i$ 
in (\ref{yvv}) and (\ref{eq:combiR}), 
one can verify the $R$-invariance of the other $I_g$.   
\end{proof}

We note that Lemma \ref{le:chiral} is applicable 
to the situation $i=0$, i.e., 
$p_i\otimes \cdots \otimes p_{j-1}
= u_\infty \otimes p_1 \otimes \cdots \otimes p_{j-1}$.

\begin{remark}\label{re:chiral}
Lemma \ref{le:chiral} (ii) does not concern    
$I_{v^\ast_1},\ldots, I_{v^\ast_{n-1}}$. 
In fact the proof does not persist since   
$V^\ast_1, \ldots, V^\ast_{n-2}$ are not contained in 
any difference of the components of $y'$ and $y$ in (\ref{eq:combiR}).
Similarly, $V_1, \ldots, V_{n-2}$ do not appear
in the differences of $x$ and $x'$.
This {\em chirality} 
of the combinatorial $R$ is a characteristic feature of 
the $D^{(1)}_n$ case. 
In contrast,  
$I_g$'s with all the generalized local energies $g$ for $A^{(1)}_n$
(so called $i$th (un)winding number \cite{KSY}) are $R$-invariant.
We shall come back to this point again in Section \ref{ss:cp}.
\end{remark}

The following proposition 
and its proof are parallel with 
Lemma 4.4 in Ref. \cite{KSY} for
type $A^{(1)}_n$.

\begin{proposition}\label{pr:zushi}
For $g = v_i \,(0 \le i \le n-1), 
w_i \,(1\le i \le n-1), \,
v^{\sigma_1}_0$ and $v^\ast_{n-1}$,
the generalized energy 
${\mathcal E}_g(p_1\otimes \cdots \otimes p_L)$
(\ref{efp}) is equal to the sum of the 
generalized local energy $g$ attached to all the 
vertices in the following diagram ($L=3$ example):

\begin{picture}(80,65)(-130,5)
\put(-8,55){$u_\infty$}
\put(17,55){$p_1$}
\put(34,55){$p_2$}
\put(51,55){$p_3$}

\put(0,0){\line(1,1){51}}
\put(34,51){\line(-1,-1){32}}
\put(17,51){\line(-1,-1){15}}
\qbezier(2,19)(0,17)(2,15)
\qbezier(2,32)(0,34)(2,36)
\put(17,0){\line(-1,1){15}}
\put(34,0){\line(-1,1){32}}
\put(51,0){\line(-1,1){51}}

\end{picture}
\end{proposition}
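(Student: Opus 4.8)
The plan is to compare the generalized energy $\mathcal{E}_g(p)$ defined in \eqref{efp} with the ``picture'' quantity $\widetilde{\mathcal E}_g(p)$, namely the sum of $g$ over all vertices of the displayed diagram (in which $p_1$ is sent through $u_\infty$, then $p_2$ is sent through $u_\infty$ and through the images of $p_1$, and so on), and to show they agree. First I would record that the diagram decomposes columnwise: the vertices lying on the broken line that carries $p_j$ to the far left, before reaching the $u_\infty$ rail, are exactly the vertices of the diagram \eqref{pene} for the word $u_\infty\otimes p_1\otimes\cdots\otimes p_{j-1}\otimes p_j$, so the $g$-sum over that column equals $I_g(u_\infty\otimes p_1\otimes\cdots\otimes p_j)$ \emph{provided} one reads off $p^{(1)}_j,\dots,p^{(j-1)}_j$ from the images produced by the earlier columns. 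Hence $\widetilde{\mathcal E}_g(p)=\sum_{j=1}^L I_g(\text{that word})$, which is precisely the right-hand side of \eqref{eig}, so $\widetilde{\mathcal E}_g(p)=\mathcal{E}_g(p)$ as soon as the columnwise reading is justified.

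The content of ``justifying the columnwise reading'' is that the element fed into column $j$ as the partner of $p_j$ on each strand is indeed $p^{(k)}_j$ for the appropriate $k$. After column $j-1$ has been processed, the strands emanating from it and re-entering column $j$ carry $u_\infty$ together with the images $p'_1,\dots,p'_{j-1}$ obtained from the column-$(j-1)$ sorting; I must check that running $p_j$ through these gives the same $p^{(k)}_j$ as in \eqref{pij}. This is exactly where the Yang--Baxter equation \eqref{ybe} enters: by Lemma~\ref{le:chiral}(i), $p^{(i)}_j$ is $R$-invariant as a function of $p_i\otimes\cdots\otimes p_{j-1}$, so replacing that subword by any $R$-equivalent word (in particular the one produced by the previous column's sorting, padded with the $u_\infty$ rail using \eqref{buu}) does not change it; and the $g$-sum $I_g$ over the column is likewise $R$-invariant by Lemma~\ref{le:chiral}(ii) for precisely the list of $g$'s in the statement — $v_a$, $w_a-v_a$, $v^\ast_{n-1}$, $v^{\sigma_1}_0$ — and then for $w_a$ itself since $w_a=(w_a-v_a)+v_a$, and for $v_0$ by \eqref{HV} and the affine Yang--Baxter equation. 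So the diagram sum is independent of the order in which one performs the braid moves, and one may always bring it into the ``column after column'' normal form. This is the step I expect to be the main obstacle: bookkeeping the strands carefully enough that each partial sum genuinely matches an $I_g$, and confirming that the restriction on $g$ in the statement is exactly the one under which all the needed invariances in Lemma~\ref{le:chiral} hold.

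It remains to handle the $u_\infty$ rail itself. Each time a strand reaches the leftmost rail it crosses $u_\infty$, and by \eqref{buu} the outgoing rail is again $u_\infty$ (up to relabelling the emerging classical element), while by \eqref{zuu} the generalized local energy $g$ at that crossing vanishes for every $g$ in our list; this is the content of \eqref{uuI}. Consequently padding with extra $u_\infty$'s on the left changes neither the diagram sum nor $\mathcal{E}_g$, which is \eqref{uE}, and the induction on $L$ closes: stripping off the last column $j=L$ removes exactly $I_g(u_\infty\otimes p_1\otimes\cdots\otimes p_L)$ from $\widetilde{\mathcal E}_g$ and leaves the diagram for $u_\infty\otimes p_1\otimes\cdots\otimes p_{L-1}$, whose sum equals $\mathcal{E}_g(p_1\otimes\cdots\otimes p_{L-1})$ by the inductive hypothesis, matching the recursion \eqref{eig} for $\mathcal{E}_g$. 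The base case $L=1$ is immediate since the diagram is a single crossing $u_\infty$ against $p_1$ and both sides equal $g(u_\infty\otimes p_1)$. Throughout, the finiteness of all sums is guaranteed by Lemma~\ref{le:ato} and the stabilization remark following it, so no convergence issue arises from the use of $u_\infty$.
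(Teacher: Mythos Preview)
Your approach is essentially the paper's own: induct on $L$, peel off the crossings on the $p_L$ strand, identify their sum with $I_g(u_\infty\otimes p_1\otimes\cdots\otimes p_L)$ via the $R$-invariance of Lemma~\ref{le:chiral}(ii), and recognize what remains as the $L{-}1$ diagram. The paper phrases the key step as a Yang--Baxter move transforming the diagram so that $p_L$ meets $p_{L-1},\ldots,p_1,u_\infty$ in their pristine form, while you phrase it as replacing the already-processed word by the $R$-equivalent original; these are the same argument.

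Two small points of confusion to clean up. First, in paragraph~2 you invoke Lemma~\ref{le:chiral}(i) to say the intermediate elements $p^{(k)}_j$ match; that lemma only guarantees the \emph{final} output $p^{(i)}_j$ is $R$-invariant, not the intermediate ones, and indeed they generally are not --- but you don't need them to be, since only the sum $I_g$ has to match, and that is Lemma~\ref{le:chiral}(ii). Second, the first half of paragraph~3 is off: the $u_\infty$-strand does \emph{not} continue to carry $u_\infty$ after its first crossing (it carries some $\xi^{(j)}\in B_\infty$), and the crossings $g(\xi^{(j)}\otimes p_j)$ do \emph{not} vanish --- \eqref{zuu} needs both arguments to be vacuum. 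Fortunately none of this is used in your actual induction, so it can simply be deleted; the argument closes without any padding by extra $u_\infty$'s or appeal to \eqref{uuI}--\eqref{uE}.
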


\begin{proof}
We invoke the induction on $L$.
For $L=1$, one has 
${\mathcal E}_g(p_1) = g(u_\infty\otimes p_1)$,
and the assertion is obviously true.
We illustrate the induction step from $L=2$ to $L=3$.
Consider the following identity obtained by 
successive applications of the Yang-Baxter equation:

\begin{equation*}
\begin{picture}(200,60)(-30,0)

\put(-8,64){$u_\infty$}
\put(18,64){$p_1$}
\put(38,64){$p_2$}
\put(58,64){$p_3$}

\put(0,0){\line(1,1){60}}
\put(0,60){\line(1,-1){60}}

\put(20,60){\line(-1,-1){17}}
\qbezier(3,43)(0,40)(3,37)
\put(40,60){\line(-1,-1){37}}

\put(20,0){\line(-1,1){17}}
\qbezier(3,23)(0,20)(3,17)
\put(40,0){\line(-1,1){37}}

\put(7.5,47.5){$\bullet$}\put(17.5,38){$\bullet$}
\put(7.5,27.5){$\bullet$}

\put(27,22){$e_1$}
\put(16,11){${\scriptsize e_2}$}
\put(6,1){$e_3$}

\put(85,30){$=$}

\multiput(120,0)(0,0){1}{
\put(-8,64){$u_\infty$}
\put(18,64){$p_1$}
\put(38,64){$p_2$}
\put(58,64){$p_3$}

\put(0,0){\line(1,1){60}}
\put(0,60){\line(1,-1){60}}

\put(20,60){\line(1,-1){37}}
\qbezier(57,43)(60,40)(57,37)
\put(40,60){\line(1,-1){17}}

\put(20,0){\line(1,1){37}}
\qbezier(57,23)(60,20)(57,17)
\put(40,0){\line(1,1){17}}

\put(45.5,40){$d_1$}
\put(35.5,30){$d_2$}
\put(25.5, 20){$d_3$}

}

\end{picture}
\end{equation*}
Here $\bullet, e_i, d_i$ stand for 
the values of $g$ at the attached vertices.
By the induction assumption, 
the sum of the three $\bullet$ in the left hand side is equal to 
${\mathcal E}_g(p_1\otimes p_2)$.
In view of the recursion relation (\ref{eig}), we are to verify
$e_1+e_2+e_3=I_g(u_\infty\otimes p_1\otimes p_2\otimes p_3)$.
By the definition, 
$I_g(u_\infty\otimes p_1\otimes p_2\otimes p_3)=d_1+d_2+d_3$ 
in the right diagram,
where $d_1= g(p_2\otimes p^{(3)}_3),
d_2= g(p_1\otimes p^{(2)}_3), 
d_3= g(u_\infty\otimes p^{(1)}_3)$.
Thanks to the $R$-invariance of $I_g$ in 
Lemma \ref{le:chiral} (ii), this is equal to 
$e_1+e_2+e_3$.
\end{proof}

\section{Integrable $D^{(1)}_n$ cellular automaton}\label{sec:sca}

\subsection{States and time evolution}
Let us recall the integrable $D^{(1)}_n$ cellular automaton 
associated with $B_l$ \cite{HKT1,HKT2}.
Consider the crystal 
$B_{l_1}\otimes  \cdots \otimes B_{l_L}$.
Its elements are called states.
We regard each component $(\zeta_1, \ldots, \overline{\zeta}_1) \in B_l$  
as a capacity $l$ box containing $\zeta_a$ particles $a$ and 
$\overline{\zeta}_a$ anti-particles $\overline{a}$ for $2 \le a \le n$, 
and furthermore 
$\overline{\zeta}_1$ extra $\overline{1}$'s which we call  bound pairs.
The remaining $1$'s represent empty space in the box.
Thus $u_l$ stands for an empty box.
The indices $a, \overline{a}$ 
will be referred as color of particles and anti-particles, respectively.
A state 
$p_1 \otimes \cdots \otimes p_L
\in B_{l_1}\otimes\cdots\otimes B_{l_L}$
represents a configuration of particles, 
anti-particles and bound pairs in an array of boxes 
with capacity $l_1,\ldots, l_L$.
Because of the constraint $\zeta_n\overline{\zeta}_n=0$ in 
(\ref{Bl}), particles $n$ and anti-particles 
$\overline{n}$ do not coexist within a box.
We shall denote the element $(3,0,1,0,2,0,1,0) \in B_7$ of $D^{(1)}_4$, 
for example, by 
$1113\overline{4}\overline{4}\overline{2}$, etc.

For a positive integer $l$, 
we define the time evolution
$T_l(p)= p'_1 \otimes \cdots \otimes p'_L$ of 
a state $p = p_1 \otimes \cdots \otimes p_L$ by
\begin{equation*}
u_l \otimes p_1\otimes \cdots 
\otimes p_L\simeq 
p'_1\otimes \cdots \otimes p'_L\otimes 
\xi
\end{equation*}
under the isomorphism 
$B_l \otimes 
(B_{l_1} \otimes \cdots \otimes B_{l_L})  
\simeq (B_{l_1} \otimes \cdots \otimes 
B_{l_L})\otimes B_l$.
Here $\xi \in B_l$ as well as $T_l(p)$ are uniquely determined 
from $p$ by the combinatorial $R$.
It can be shown that
\begin{equation}\label{eq:uka}
\xi = u_l\; \hbox{ if 
$p_j = u_{l_j}$ for $L' \le j \le L$ with 
sufficiently large $L-L'$}.
\end{equation}
The time evolutions $\{T_l\}$ form a commuting family, and 
$T_l$ stabilizes as $l$ gets large,  
which will be denoted by $T_\infty$.

When $l_1=\cdots =l_L=1$, $T_\infty$ is factorized as 
\begin{equation*}
T_\infty = K_{2} K_{3}\cdots K_{n}K_{\overline{n}}\cdots
K_{\overline{3}}K_{\overline{2}}
\end{equation*}
with $K_a$ given by the following algorithm 
(we understand $\overline{\overline{a}}=a$) \cite{HKT2}.
\begin{enumerate}
\item Replace each $\overline{1}$ by a pair 
$a, \overline{a}$ within a box.
\item Pick the leftmost $a$ (if any) and move it to the nearest right 
box which is empty or containing just $\overline{a}$.
(Boxes involving the pair $a, \overline{a}$ are prohibited as the destination.)

\item  Repeat (2) for those $a$'s that are not yet moved
    until all of $a$'s are moved once.
\item Replace the pair $a, \overline{a}$ within a box (if any) by 
$\overline{1}$.
\end{enumerate}

In the above, taking some $b \, (\neq 1) \in B_1$ 
away from a box means the change 
of the local state $b \rightarrow 1$.
Similarly, putting $b \, (\neq 1) \in B_1$ into an empty box means 
the change $1 \rightarrow b$.
The steps (1) and (4) can be viewed as pair creation and 
annihilation, respectively.

\begin{example}\label{ex:hatten1}
We consider $D^{(1)}_4$ and states from $B^{\otimes L}_1$ with $L=48$.
Successive time evolutions of the initial state 
on the first line under $T_\infty$ is presented downward,
where $b_1\otimes \cdots \otimes b_L$ is simply denoted by 
a horizontal  array $b_1 b_2 \ldots b_L$.
It shows a collision of solitons with amplitudes $6$ and $3$.
\begin{center}
$1\,1\,1\,\overline{3}\,\overline{3}\,\overline{4}\,3\,2\,2\,1\,1\,1\,1\,1\,\overline{2}\,\overline{2}\,\overline{4}\,1\,1\,1\,1\,1\,1\,1\,1\,1\,1\,1\,1\,1\,1\,1\,1\,1\,1\,1\,1\,1\,1\,1\,1\,1\,1\,1\,1\,1\,1\,1$\\
$1\,1\,1\,1\,1\,1\,1\,1\,1\,\overline{3}\,\overline{3}\,\overline{4}\,3\,2\,2\,1\,1\,\overline{2}\,\overline{2}\,\overline{4}\,1\,1\,1\,1\,1\,1\,1\,1\,1\,1\,1\,1\,1\,1\,1\,1\,1\,1\,1\,1\,1\,1\,1\,1\,1\,1\,1\,1$\\
$1\,1\,1\,1\,1\,1\,1\,1\,1\,1\,1\,1\,1\,1\,1\,\overline{3}\,\overline{3}\,\overline{4}\,3\,2\,\overline{1}\,\overline{2}\,\overline{4}\,1\,1\,1\,1\,1\,1\,1\,1\,1\,1\,1\,1\,1\,1\,1\,1\,1\,1\,1\,1\,1\,1\,1\,1\,1$\\
$1\,1\,1\,1\,1\,1\,1\,1\,1\,1\,1\,1\,1\,1\,1\,1\,1\,1\,1\,1\,1\,\overline{3}\,4\,\overline{1}\,\overline{1}\,\overline{4}\,\overline{4}\,\overline{4}\,1\,1\,1\,1\,1\,1\,1\,1\,1\,1\,1\,1\,1\,1\,1\,1\,1\,1\,1\,1$\\
$1\,1\,1\,1\,1\,1\,1\,1\,1\,1\,1\,1\,1\,1\,1\,1\,1\,1\,1\,1\,1\,1\,1\,1\,1\,4\,2\,2\,\overline{2}\,\overline{2}\,\overline{3}\,\overline{4}\,\overline{4}\,\overline{4}\,1\,1\,1\,1\,1\,1\,1\,1\,1\,1\,1\,1\,1\,1$\\
$1\,1\,1\,1\,1\,1\,1\,1\,1\,1\,1\,1\,1\,1\,1\,1\,1\,1\,1\,1\,1\,1\,1\,1\,1\,1\,1\,1\,4\,2\,2\,1\,1\,1\,\overline{2}\,\overline{2}\,\overline{3}\,\overline{4}\,\overline{4}\,\overline{4}\,1\,1\,1\,1\,1\,1\,1\,1$\\
$1\,1\,1\,1\,1\,1\,1\,1\,1\,1\,1\,1\,1\,1\,1\,1\,1\,1\,1\,1\,1\,1\,1\,1\,1\,1\,1\,1\,1\,1\,1\,4\,2\,2\,1\,1\,1\,1\,1\,1\,\overline{2}\,\overline{2}\,\overline{3}\,\overline{4}\,\overline{4}\,\overline{4}\,1\,1$
\end{center}
\end{example}

For $l_1,\ldots, l_L$ general, $T_\infty$ still admits 
a similar, although slightly more involved, algorithm.
We omit it here and give an example instead.  

\begin{example}\label{ex:hatten2}
We consider $D^{(1)}_4$ and states from 
$B_6\otimes B_3 \otimes B_4 \otimes B_4 \otimes B^{\otimes 8}_2$.
\[
\begin{array}{llllllllllll}
124\overline{3}\overline{2}\overline{1}\;\cdot& 
234 \;\cdot& 2\overline{3}\overline{2}\overline{1}
\;\cdot& 13\overline{4}\overline{4}\;\cdot& 11 \;\cdot& 
11 \;\cdot& 11 \;\cdot& 11 \;\cdot& 11 \;\cdot& 
11 \;\cdot& 11 \;\cdot& 11 \\
 111111 \;\cdot& 123 \;\cdot& 
12\overline{3}\overline{2} \;\cdot& 
344\overline{1}\;\cdot& \overline{3}\overline{2} \;\cdot
& \overline{4}\overline{3} \;\cdot&
3\overline{4}\;\cdot& 12 \;\cdot& 11 \;\cdot& 11 \;\cdot& 
11 \;\cdot& 11 \\
 111111 \;\cdot& 111 \;\cdot& 1123 \;\cdot& 1113 \;\cdot& 
\overline{3}\overline{3} \;\cdot& 44 \;\cdot&
23 \;\cdot& 1\overline{2}\;\cdot& 
\overline{3}\overline{2} \;\cdot& \overline{4}\overline{3} 
\;\cdot& 3\overline{4} \;\cdot& 12 \\
111111 \;\cdot& 111 \;\cdot& 1111 \;\cdot& 1123 \;\cdot& 
13 \;\cdot& 11 \;\cdot&
11 \;\cdot& \overline{3}\overline{3} \;\cdot& 44 \;\cdot& 
23 \;\cdot& 11 \;\cdot& 1\overline{2}\\
 111111 \;\cdot& 111 \;\cdot& 1111 \;\cdot& 1111 \;\cdot& 
23 \;\cdot& 13 \;\cdot&
   11 \;\cdot& 11 \;\cdot& 11 \;\cdot& 11 \;\cdot& 
\overline{3}\overline{3} \;\cdot& 44 \\
 111111 \;\cdot& 111 \;\cdot& 1111 \;\cdot& 1111 \;\cdot& 
11 \;\cdot& 23 \;\cdot&
   13 \;\cdot& 11 \;\cdot& 11 \;\cdot& 11 \;\cdot& 11 \;\cdot& 11 \\
 111111 \;\cdot& 111 \;\cdot& 1111 \;\cdot& 
1111 \;\cdot& 11 \;\cdot& 11 \;\cdot&
   23 \;\cdot& 13 \;\cdot& 11 \;\cdot& 11 \;\cdot& 
11 \;\cdot& 11 \\
 111111 \;\cdot& 111 \;\cdot& 1111 \;\cdot& 1111 \;\cdot& 
11 \;\cdot& 11 \;\cdot&
   11 \;\cdot& 23 \;\cdot& 13 \;\cdot& 11 \;\cdot& 
11 \;\cdot& 11 \\
 111111 \;\cdot& 111 \;\cdot& 1111 \;\cdot& 1111 \;\cdot& 
11 \;\cdot& 11 \;\cdot&
   11 \;\cdot& 11 \;\cdot& 23 \;\cdot& 13 \;\cdot& 
11 \;\cdot& 11 \\
 111111 \;\cdot& 111 \;\cdot& 1111 \;\cdot& 
1111 \;\cdot& 11 \;\cdot& 11 \;\cdot&
   11 \;\cdot& 11 \;\cdot& 11 \;\cdot& 23 \;\cdot& 
13 \;\cdot& 11 \\
 111111 \;\cdot& 111 \;\cdot& 1111 \;\cdot& 
1111 \;\cdot& 11 \;\cdot& 11 \;\cdot&
   11 \;\cdot& 11 \;\cdot& 11 \;\cdot& 11 \;\cdot& 23 \;\cdot& 13
\end{array}
\]
Here, $\cdot$ represents $\otimes$.
\end{example}

\begin{remark}\label{re:u}
Suppose 
$p_j=u_{l_j}$ for $1 \le j \le k$ in a state 
$p=p_1 \otimes \cdots \otimes p_L$.
Then in the state 
$T_\infty(p)=p'_1 \otimes \cdots \otimes p'_L$,
$p'_j=u_{l_j}$ is valid for $1 \le j \le k+1$.
\end{remark}  

We postpone the inverse scattering formalism 
of the dynamics to Theorem \ref{th:ist}.

\subsection{Counting particles and anti-particles} 
Recall that $\mathfrak{a}(\zeta)$ is defined in (\ref{all}).
In our present context, 
it is the number of all the particles and anti-particles
within a box specified by $\zeta \in B_l$, 
where the term $2\overline{\zeta}_1$
means that a bound pair is regarded as
a pair of a particle and an anti-particle (whose color is unspecified). 
The symbol $\mathfrak{a}$ means $\mathfrak{a}$ll kinds of 
(anti-)particles.

Let $p=p_1 \otimes \cdots \otimes p_L$ be a state
and write its time evolution as
\[T^t_\infty(p_1 \otimes \cdots \otimes p_L)
= p^t_1 \otimes \cdots \otimes p^t_L,
\]
where $p^t_j \in B_{l_j}$.
We write 
$p_j =p^0_j=(\zeta_{j,1},\ldots,  \zeta_{j,n},
\overline{\zeta}_{j,n}, \ldots, \overline{\zeta}_{j,1})
\in B_{l_j}$.
For any elements $a_1,\ldots, a_r$ of 
$\{2,3,\ldots,n,\overline{n},\ldots, \overline{2},\overline{1}\}$,
we define the {\em counting function}
\begin{equation}\label{eq:rho}
\rho_{a_1,\ldots, a_r}(p) = \sum_{j=1}^L
(\zeta_{j,a_1}+\dots+\zeta_{j,a_r})
+ \sum_{t\ge 1}\sum_{j=1}^L\mathfrak{a}(p^t_j),
\end{equation}
where 
$\zeta_{j, \overline{3}}=\overline{\zeta}_{j,3}$, etc.
The dependence on $a_1,\ldots, a_r$ enters the first term only.
The indices in $\rho_{a_1,\ldots, a_r}$ will always be arranged 
in the order $2,3,\ldots,n,\overline{n},\ldots, 
\overline{3},\overline{2},\overline{1}$.
The second term is finite due to Remark \ref{re:u}.
In fact the double sum may well be restricted to
$\sum_{t=1}^{L-1}\sum_{j=t+1}^L$ where 
the nonzero contributions are contained.
This region is depicted as the SW quadrant of 
the time evolution patterns like 
Example \ref{ex:hatten1} and \ref{ex:hatten2}. 
\begin{equation}\label{sw}
\unitlength 0.1in
\begin{picture}( 12.0000,  12.0000)( 10.0000, -17.0000)
\special{pn 8}%
\special{pa 988 600}%
\special{pa 1948 600}%
\special{pa 1948 760}%
\special{pa 988 760}%
\special{pa 988 600}%
\special{fp}%
%
\special{pn 8}%
\special{sh 0.300}%
\special{pa 988 760}%
\special{pa 1948 760}%
\special{pa 1948 1720}%
\special{pa 988 760}%
\special{ip}%
\put(18.4300,-6.8800){\makebox(0,0){$p_L$}}%
\put(13.1500,-6.8800){\makebox(0,0){$p_2$}}%
\put(11.1500,-6.8800){\makebox(0,0){$p_1$}}%
\put(15.6300,-6.8800){\makebox(0,0){$\cdots$}}%
\end{picture}
\end{equation}
The first term in (\ref{eq:rho}) is the number of (anti-)particles
with colors $a_1,\ldots, a_r$ 
contained in the top row which is the state $p$ itself.
The second term counts all kinds of 
particles and anti-particles in the hatched domain 
in (\ref{sw}).\footnote{
More precisely, it should be hatched in a
staircase shape.}
By the definition it follows that 
\begin{equation}\label{nashi}
\rho_\emptyset(p) = 
\rho_{2,\ldots,n,\overline{n},\ldots,\overline{2},\overline{1},\overline{1}}
(T_\infty(p)).
\end{equation}

Given a state $p = p_1\otimes \cdots \otimes p_L$, we write  
\begin{equation}\label{pk}
p_{[k]} = p_1\otimes \cdots \otimes p_k\quad(1 \le k \le L).
\end{equation}

\begin{example}\label{ex:soba}
Let $p$ be the state in the first line in Example \ref{ex:hatten2}.
Then, the counting function $\rho_{a_1,\ldots, a_r}(p_{[k]})$ 
for $1\le k \le 9$ 
takes the following values. (The middle column shows 
$g$ such that $\rho_{a_1,\ldots, a_r} = \rho_g$ in 
(\ref{rr1})--(\ref{rr4}).)
\begin{equation*}
\begin{array}{|c|c|rcccccccc|}
\hline
a_1,\ldots, a_r & g &k=1 \;& \, 2\; & \,3\; & \,4\; 
& 5 & 6 & 7 & 8 & 9\\ 
\hline
&&&&&&&&&&\vspace{-0.3cm}\\
234\overline{4}\overline{3}\overline{2}\overline{1}
\overline{1}
& v_0 & \;
6& 11& 21& 32& 39& 46& 53& 60& 67\\
234\overline{4}\overline{3}\overline{2}\overline{1} 
& v_1 &\;
5& 10& 19& 30& 37& 44& 51& 58& 65 \\
234\overline{4}\overline{3}\overline{1} 
& v_2 &\;
4& 9& 17& 28& 35& 42& 49& 56& 63 \\
234\overline{1} 
& v_3 &\;
3& 8& 15& 24& 31& 38& 45& 52& 59\\
23\overline{4}\overline{1} 
& v^\ast_3 &\;
2& 6& 13& 24& 31& 38& 45& 52& 59\\
2\overline{1} 
& w_2-v_2 &\;
2& 5& 12& 20& 27& 34& 41& 48& 55\\
\overline{1} 
& \;w_1-v_1\; &\;
1& 3& 9& 17& 24& 31& 38& 45& 52\\
\emptyset 
& v^{\sigma_1}_0 &\;
0& 2& 7& 15& 22& 29& 36& 43& 50\\
\hline
\end{array}
\end{equation*}
\end{example}

\vspace{0.3cm}
We set $\rho_{a_1,\ldots, a_r}(p_{[0]})=0$
for any $a_1,\ldots, a_r$.
Consider the difference
$\rho_{2\overline{1}}(p_{[k]})- \rho_{\overline{1}}(p_{[k]})$
for example.
By the definition (\ref{eq:rho}),
it is the number of color $2$ particles contained in $p_{[k]}$.
Thus we have
\begin{eqnarray*}
&&\sharp \text{(color 2 particles) in } p_k \,(=\zeta_{k,2}) \\
&&\; = \rho_{2\overline{1}}(p_{[k]})- \rho_{\overline{1}}(p_{[k]})
-\rho_{2\overline{1}}(p_{[k-1]})+ \rho_{\overline{1}}(p_{[k-1]}).
\end{eqnarray*}
This is an example of the relations that reproduces 
a local variable from non-local counting functions.
Given $l_k$, the set of counting functions that are necessary and 
sufficient to completely reproduce 
the local state $p_k \in B_{l_k}$ is not unique.
However there is a choice that is linked with the 
generalized energies in Section \ref{ss:gef}.
By using the function $\gamma_g$ in 
(\ref{gva})--(\ref{gvs}), we set
\begin{equation}\label{rhog}
\rho_g(p)=\sum_{j=1}^L\gamma_g(p_j)
+ \sum_{t\ge 1}\sum_{j=1}^L\mathfrak{a}(p^t_j)
\end{equation}
for $g=v_a\,(0\le a \le n-1), \,w_a-v_a\,(1 \le a \le n-2)$,
$v^\ast_{n-1}$ and $v^{\sigma_1}_0$.
Although the notations $\rho_g$ here and $\rho_{a_1,\ldots, a_r}$ in 
(\ref{eq:rho}) are somewhat confusing, 
we dare to use the both in the sequel 
supposing the resemblance is not too serious.
Then (\ref{rhog}) is explicitly given as follows:
\begin{eqnarray}
\rho_{v_a}(p) &=& \rho_{2,\ldots,n,\overline{n},
\ldots,\overline{a+1}, \overline{1}}(p)\;\;(0 \le a \le n-2),
\label{rr1}\\
\rho_{v_{n-1}}(p) 
&=& \rho_{2,3,\ldots, n-1,n,\overline{1}}(p),\quad
\rho_{v^\ast_{n-1}}(p)
= \rho_{2,3,\ldots, n\!-\!1,\overline{n},\overline{1}}(p),
\label{rr2}\\
\rho_{w_a-v_a}(p) &=& \rho_{2,3,\ldots, a,\overline{1}}(p)
\;\;(1 \le a \le n-2),
\label{rr3}\\
\rho_{v^{\sigma_1}_0}(p)&=& \rho_{\emptyset}(p).
\label{rr4}
\end{eqnarray}
The last one is subsidiary in that 
$
\rho_{v^{\sigma_1}_0}(p)
= \rho_{w_1-v_1}(p)-\rho_{v_0}(p) +\rho_{v_1}(p)
$
holds reflecting (\ref{W1}).
One may also additionally introduce 
\begin{equation*}
\rho_{w_a}(p) = \rho_{w_a-v_a}(p) + \rho_{v_a}(p)
=  \sum_{j=1}^L\gamma_{w_a}(p_j)
+ 2\sum_{t\ge 1}\sum_{j=1}^L\mathfrak{a}(p^t_j)
\end{equation*}
for $1 \le a \le n-2$. See (\ref{wga2}).
For $D^{(1)}_4$,  the counting functions 
(\ref{rr1})--(\ref{rr4}) 
are precisely those listed in Example \ref{ex:soba}.

\begin{theorem}\label{pr:rho}
For $p_{[k]}$ in (\ref{pk}), set 
$\delta\rho_g = \rho_g(p_{[k]})-\rho_g(p_{[k-1]})$.
The counting functions (\ref{rr1})--(\ref{rr3}) 
reproduce the local state
$p_k = (\zeta_{1},\ldots,  \zeta_{n},
\overline{\zeta}_{n}, \ldots, \overline{\zeta}_{1})
\in B_{l_k}$ by
\begin{eqnarray*}
\zeta_{1} &=& l_k -\delta\rho_{v_0}+\delta\rho_{w_1-v_1},\\
\zeta_{a} &=& \delta\rho_{w_a-v_a}-\delta\rho_{w_{a-1}-v_{a-1}}
\;\;(2 \le a \le n-2),\\
\zeta_{n-1} &=& 
\min(\delta\rho_{v_{n-1}},\delta\rho_{v^\ast_{n-1}})
-\delta\rho_{w_{n-2}-v_{n-2}},\\
\zeta_{n} &=& \max(\delta\rho_{v_{n-1}}-\delta\rho_{v^\ast_{n-1}},0),\\
\overline{\zeta}_{n} &=& 
\max(\delta\rho_{v^\ast_{n-1}}-\delta\rho_{v_{n-1}},0),\\
\overline{\zeta}_{n-1} &=& 
-\max(\delta\rho_{v_{n-1}},\delta\rho_{v^\ast_{n-1}})
+\delta\rho_{v_{n-2}},\\
\overline{\zeta}_{a} &=& 
\delta\rho_{v_{a-1}}-\delta\rho_{v_a}
\;\;(1 \le a \le n-2).
\end{eqnarray*}
\end{theorem}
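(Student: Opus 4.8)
The plan is to reduce everything to the definition (\ref{rhog}) of $\rho_g$ together with the values of $\gamma_g$ in (\ref{gva})--(\ref{gwv}). The key observation is that the second sum $\sum_{t\ge 1}\sum_j \mathfrak{a}(p^t_j)$ in (\ref{rhog}) depends on the state $p_{[k]}$ only through its image under $T_\infty$, and — crucially — it is the \emph{same} multiset-valued quantity for every $g$ at fixed $k$. Hence when we form the difference $\delta\rho_g = \rho_g(p_{[k]})-\rho_g(p_{[k-1]})$ we must be careful: the ``tail'' contributions do not simply cancel, since $p_{[k]}$ and $p_{[k-1]}$ have different time evolutions. However, the combination we actually use, $\delta\rho_g-\delta\rho_{g'}$, involves only the $g$-dependent part, which by (\ref{rhog}) is $\sum_{j=1}^{k}\bigl(\gamma_g(p_j)-\gamma_{g'}(p_j)\bigr)-\sum_{j=1}^{k-1}\bigl(\gamma_g(p_j)-\gamma_{g'}(p_j)\bigr)=\gamma_g(p_k)-\gamma_{g'}(p_k)$. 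So the first step is to record the clean identity
\[
\delta\rho_g-\delta\rho_{g'} = \gamma_g(p_k)-\gamma_{g'}(p_k)
\]
for any two $g,g'$ among those appearing in (\ref{rr1})--(\ref{rr3}), and more generally that for any linear combination $\sum_g c_g\gamma_g$ with $\sum_g c_g=0$ one has $\sum_g c_g\,\delta\rho_g=\sum_g c_g\gamma_g(p_k)$.

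The second step is then pure bookkeeping with the explicit formulas for $\gamma_g$. From (\ref{gva})--(\ref{gwv}) one reads off, for $\zeta=p_k\in B_{l_k}$: $\gamma_{w_a-v_a}(\zeta)-\gamma_{w_{a-1}-v_{a-1}}(\zeta)=\zeta_a$ for $2\le a\le n-2$, giving the formula for $\zeta_a$; $\gamma_{v_{a-1}}(\zeta)-\gamma_{v_a}(\zeta)=\overline{\zeta}_a$ for $1\le a\le n-2$, giving $\overline{\zeta}_a$; and $l_k-\gamma_{v_0}(\zeta)+\gamma_{w_1-v_1}(\zeta)=l_k-\mathfrak{a}(\zeta)+\overline{\zeta}_1=\zeta_1$ by (\ref{all}), giving $\zeta_1$. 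The only subtle entries are those mixing $\gamma_{v_{n-1}}$ and $\gamma_{v^\ast_{n-1}}$, which differ by $\zeta_n-\overline{\zeta}_n$ (compare (\ref{gvn1}) and (\ref{gvn2})); here we must use the crystal constraint $\zeta_n\overline{\zeta}_n=0$ from (\ref{Bl}), so that exactly one of $\zeta_n,\overline{\zeta}_n$ is nonzero, which is precisely what lets $\max(\cdot,0)$ recover each of them from the signed difference, and lets $\min(\gamma_{v_{n-1}},\gamma_{v^\ast_{n-1}})$ equal $\zeta_2+\cdots+\zeta_{n-1}+\overline{\zeta}_1$. Subtracting $\gamma_{w_{n-2}-v_{n-2}}(\zeta)$ then isolates $\zeta_{n-1}$, and subtracting that minimum from $\gamma_{v_{n-2}}(\zeta)=\zeta_2+\cdots+\zeta_n+\overline{\zeta}_n+\cdots+\overline{\zeta}_{n-1}+\overline{\zeta}_1$ isolates $\overline{\zeta}_{n-1}$.

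I would present it as: (i) prove the telescoping identity for $g$-differences; (ii) verify the seven displayed equations by substituting the formulas (\ref{all}), (\ref{gva})--(\ref{gwv}) for $\gamma_g(p_k)$ and simplifying, treating the $n,\overline n$ case with the constraint $\zeta_n\overline\zeta_n=0$ separately. The main obstacle — really the only place where something beyond algebra happens — is making sure the ``tail'' sum $\sum_{t\ge 1}\sum_j\mathfrak a(p^t_j)$ is genuinely independent of $g$ so that it drops out of every difference used; this is immediate from (\ref{rhog}) since that term carries no $g$, but it is worth stating explicitly because it is the reason only \emph{differences} (equivalently, the $\mathfrak{a}$-free combinations) of the $\rho_g$ are local. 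A secondary point requiring a line of care is the use of $\min$ and $\max$ in the $\zeta_{n-1},\zeta_n,\overline\zeta_n,\overline\zeta_{n-1}$ formulas: one checks that under $\zeta_n\overline\zeta_n=0$ the identities $\max(t,0)+\max(-t,0)=|t|$ and $\min(s,s-t)=s-\max(t,0)$ with $t=\zeta_n-\overline\zeta_n$ reproduce exactly the claimed expressions, so no case analysis on the sign of $t$ is actually needed in the final writeup.
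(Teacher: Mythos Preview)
Your approach is correct and matches the paper's own proof, which simply declares the result ``straightforward by using (\ref{eq:rho}), (\ref{rr1})--(\ref{rr3}) and $\min(\zeta_n,\overline{\zeta}_n)=0$''; your write-up makes explicit the key point that $\delta\rho_g=\gamma_g(p_k)+B$ with $B$ independent of $g$, so that every balanced combination (including those involving $\min$ and $\max$) reduces to the corresponding combination of the $\gamma_g(p_k)$. One small slip: in isolating $\overline{\zeta}_{n-1}$ you wrote ``subtracting that minimum from $\gamma_{v_{n-2}}$'', but the formula uses $\max(\delta\rho_{v_{n-1}},\delta\rho_{v^\ast_{n-1}})$, and indeed $\gamma_{v_{n-2}}-\max(\gamma_{v_{n-1}},\gamma_{v^\ast_{n-1}})=\overline{\zeta}_{n-1}$ while the version with $\min$ leaves an extra $\zeta_n+\overline{\zeta}_n$.
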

\begin{proof}
Straightforward by using (\ref{eq:rho}), (\ref{rr1})--(\ref{rr3}) and 
$\min(\zeta_{n}, \overline{\zeta}_{n})=0$.
\end{proof}

\section{Main result}\label{sec:main}

\subsection{Counting functions and generalized energies}
\begin{theorem}\label{th:main}
For any state $p \in B_{l_1} \otimes \cdots \otimes B_{l_L}$,
the counting functions and 
the generalized energies (\ref{efp}) coincide, namely,  
\begin{equation}\label{eq:main}
{\mathcal E}_g(p) = \rho_g(p)
\end{equation}
for $g=v_a\,(0\le a \le n-1), \,w_a-v_a\,(1 \le a \le n-2)$,
$v^\ast_{n-1}$ and $v^{\sigma_1}_0$.
\end{theorem}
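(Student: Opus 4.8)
The plan is to establish \eqref{eq:main} by induction on $L$, exploiting the two recursive structures already available: the recursion \eqref{eig} for ${\mathcal E}_g$ in terms of the quantities $I_g$, and an analogous recursion for $\rho_g$ obtained by peeling off the top row of the time-evolution pattern \eqref{sw}. For the base case $L=1$ we have ${\mathcal E}_g(p_1)=g(u_\infty\otimes p_1)$, which by Lemma \ref{le:ato} (and its $w_a$ analogue) equals $\gamma_g(p_1)+\mathfrak{a}(p_1^\ast)-\gamma_g(p_1^\ast)$ where $u_\infty\otimes p_1\simeq u_\infty\otimes p_1'$ forces $p_1'=u_\infty$ via \eqref{buu}, so the correction terms telescope away and we are left with $\gamma_g(p_1)$; on the other side $\rho_g(p_1)=\gamma_g(p_1)$ since a single box admits no further time evolution. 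For the inductive step, first I would relate the one-step time evolution $T_\infty$ to the diagram \eqref{pene}: by the definition of $T_\infty$, the array $u_\infty\otimes p_1\otimes\cdots\otimes p_L\simeq p_1'\otimes\cdots\otimes p_L'\otimes \xi$, and reading off the SW quadrant of the evolution pattern shows that $\sum_j\mathfrak{a}(p_j^t)$ for $t\ge 1$, when split between the first time step and the rest, yields $\rho_g(p)=\sum_j\gamma_g(p_j)+\sum_j\mathfrak{a}(p_j^1)+\bigl(\text{contribution of }T_\infty^2,T_\infty^3,\dots\bigr)$.

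The key identity I would isolate is a \emph{local} one at the level of the combinatorial $R$: for a single vertex $R(b\otimes c)=\tilde c\otimes\tilde b$, I expect
\[
g(b\otimes c) = \gamma_g(c) + \mathfrak{a}(\tilde b) - \gamma_g(\tilde b) \quad\text{when }b=u_\infty,
\]
which is exactly Lemma \ref{le:ato}, and more generally that summing $g$ along the lower-left boundary of the diagram in Proposition \ref{pr:zushi} telescopes. Concretely, I would use Proposition \ref{pr:zushi}, which already says ${\mathcal E}_g(p_1\otimes\cdots\otimes p_L)$ equals the sum of $g$ over all vertices of the triangular diagram with apex column $u_\infty$ and edges $u_\infty,p_1,\dots,p_L$ going in. Then I would reorganize that vertex sum \emph{row by row} from the top: the top row consists of the vertices producing $T_\infty$, and the remaining rows form the same triangular diagram for $T_\infty(p)$ with an extra $u_\infty$ prepended on the left — here Remark \ref{re:u} and the stabilization \eqref{eq:uka} guarantee the prepended columns eventually carry only $u_\infty$, so by \eqref{uuI}--\eqref{uE} they contribute nothing and we may legitimately write
\[
{\mathcal E}_g(p) = \bigl(\text{sum of }g\text{ over the top row}\bigr) + {\mathcal E}_g(T_\infty(p)).
\]
Matching this against the parallel decomposition of $\rho_g$, the induction closes provided the top-row sum of $g$ equals $\sum_j\gamma_g(p_j) - \sum_j\gamma_g(p_j^1) + \sum_j\mathfrak{a}(p_j^1)$.

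So the crux reduces to proving that \emph{one row} of the diagram — i.e.\ $I_g(u_\infty\otimes p_1\otimes\cdots\otimes p_L)$, which by Lemma \ref{le:chiral}(ii) is $R$-invariant and hence may be computed in the gauge \eqref{pene} where $p_j$ is pushed all the way left past $u_\infty$ — telescopes to $\sum_{j}\bigl(\gamma_g(p_j)-\gamma_g(p_j^1)\bigr)+\sum_j\mathfrak{a}(p_j^1)$. I would prove this by a secondary induction on $L$ (or on the number of vertices in the row), at each vertex invoking Lemma \ref{le:ato}: when the incoming left edge is effectively $u_\infty$ (which stays $u_\infty$ after passing through any box by \eqref{buu}), the local contribution $g(u_\infty\otimes p^{(k+1)}_j)=\gamma_g(p^{(k+1)}_j)+\mathfrak{a}(q)-\gamma_g(q)$ where $q$ is the box that gets updated, and the $\gamma_g$-terms cancel telescopically along the row while the $\mathfrak{a}$-terms accumulate to $\sum_j\mathfrak{a}(p_j^1)$. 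The main obstacle I anticipate is bookkeeping the $w_a$ (equivalently $w_a-v_a$) cases, where Lemma \ref{le:ato} carries the factor $2\mathfrak{a}(\zeta')$ rather than $\mathfrak{a}(\zeta')$ (see \eqref{wga2}), so the telescoping must be done with the correct multiplicities; and, relatedly, making the "effectively $u_\infty$ on the left edge" reduction fully rigorous — this is where $R$-invariance (Lemma \ref{le:chiral}) and the stabilization statements \eqref{eq:uka}, Remark \ref{re:u} do the real work, and one must check the chirality caveat of Remark \ref{re:chiral} does not spoil the argument for the specific list of $g$'s in the statement (it does not, precisely because those are the $R$-invariant ones).
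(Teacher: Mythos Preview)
Your approach coincides with the paper's: prepend an extra $u_\infty$, use Proposition~\ref{pr:zushi} together with the $R$-invariance of Lemma~\ref{le:chiral}(ii) to obtain ${\mathcal E}_g(p)-{\mathcal E}_g(T_\infty(p))=\sum_i g(\xi^{(i)}\otimes p_i)$ (this is exactly the paper's Lemma~\ref{le:edif}), match this termwise against $\rho_g(p)-\rho_g(T_\infty(p))=\sum_i\bigl(\gamma_g(p_i)+\mathfrak{a}(p'_i)-\gamma_g(p'_i)\bigr)$ via Lemma~\ref{le:ato}, and iterate until both sides vanish on the vacuum $T_\infty^L(p)=u_{l_1}\otimes\cdots\otimes u_{l_L}$. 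Two minor slips to fix: the carrier $\xi^{(i)}$ does \emph{not} remain equal to $u_\infty$ after crossing a box --- only its first coordinate stays large, which is exactly the hypothesis Lemma~\ref{le:ato} needs; and what you call the ``top-row'' sum $\sum_i g(\xi^{(i)}\otimes p_i)$ is \emph{not} $I_g(u_\infty\otimes p_1\otimes\cdots\otimes p_L)$ (that is the diagonal where $p_L$ crosses all its predecessors), and for $g=w_a-v_a$ Lemma~\ref{le:ato} already carries the coefficient $1$ on $\mathfrak{a}(\zeta')$, so no special bookkeeping is required.
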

Here, ${\mathcal E}_{w_a-v_a}(p)$ should be understood as
${\mathcal E}_{w_a}(p)-{\mathcal E}_{v_a}(p)$,
and the same convention is assumed for  $I_{w_a-v_a}(p)$
in the sequel.
Of course ${\mathcal E}_{w_a}(p) = \rho_{w_a}(p)$ follows as a corollary. 
The $g$'s in Theorem \ref{th:main} are the same as those considered 
in Lemma \ref{le:chiral} (ii) and (\ref{rhog}).
By substituting (\ref{rr1})--(\ref{rr4})
into (\ref{eq:main}), the theorem may be rephrased as
\begin{eqnarray*}
{\mathcal E}_{v_a}(p)
&=&\rho_{2,\ldots,n,\overline{n},
\ldots,\overline{a+1}, \overline{1}}(p)
\;\;\;(0 \le a \le n-2), \\
{\mathcal E}_{v_{n-1}}(p)&=&
\rho_{2,3,\ldots, n-1,n,\overline{1}}(p),\quad
{\mathcal E}_{v^\ast_{n-1}}(p)
=\rho_{2,3,\ldots, n\!-\!1,\overline{n},\overline{1}}(p),\\
{\mathcal E}_{w_a-v_a}(p)&=&
\rho_{2,3,\ldots, a,\overline{1}}(p)
\;\;\;(1 \le a \le n-2),\\
{\mathcal E}_{v^{\sigma_1}_0}(p) &=& 
\rho_\emptyset(p).
\end{eqnarray*}

For the proof we need one more Lemma.

\begin{lemma}\label{le:edif}
Let $p=p_1 \otimes \cdots \otimes p_L
\in B_{l_1} \otimes \cdots \otimes B_{l_L}$.
For those $g$'s in Theorem \ref{th:main}, 
the following equality is valid: 
\begin{equation}\label{edif}
{\mathcal E}_g(p) - {\mathcal E}_g(T_\infty(p))
= \sum_{i=1}^Lg(\xi^{(i)}\otimes p_i),
\end{equation}
where $\xi^{(i)}\in B_\infty$ is defined by 
$u_\infty\otimes p_1\otimes \cdots \otimes p_{i-1}
\simeq p'_1\otimes \cdots \otimes p'_{i-1}\otimes 
\xi^{(i)}$. 
\end{lemma}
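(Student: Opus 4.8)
The plan is to exploit the two descriptions of the generalized energy: the recursive one coming from the Yang--Baxter diagram (Proposition~\ref{pr:zushi} together with the recursion (\ref{eig})) and the transfer-matrix picture supplied by the time evolution $T_\infty$. Concretely, observe that by definition $T_\infty(p)=p'_1\otimes\cdots\otimes p'_L$ arises from $u_\infty\otimes p_1\otimes\cdots\otimes p_L\simeq p'_1\otimes\cdots\otimes p'_L\otimes\xi$, so the single horizontal strip of combinatorial $R$ moves carried out to form $T_\infty(p)$ is exactly a row of the diagram in Proposition~\ref{pr:zushi}; the claim (\ref{edif}) says that this one row of vertices contributes precisely $\sum_i g(\xi^{(i)}\otimes p_i)$ to the difference of generalized energies.

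First I would set up the telescoping. For each $i$, let $\xi^{(i)}\in B_\infty$ be as in the statement, so that $u_\infty\otimes p_1\otimes\cdots\otimes p_{i-1}\simeq p'_1\otimes\cdots\otimes p'_{i-1}\otimes\xi^{(i)}$ and, applying the combinatorial $R$ once more through $p_i$, one obtains $\xi^{(i)}\otimes p_i\mapsto p^{(i)}\otimes \text{(something)}$ feeding into $\xi^{(i+1)}$; in particular $\xi^{(i+1)}$ is the carrier emerging after $p_i$ is incorporated. The key is then to rewrite ${\mathcal E}_g(p)$ using (\ref{eig}) as $\sum_{j}I_g(u_\infty\otimes p_1\otimes\cdots\otimes p_j)$, and to do the analogous thing for ${\mathcal E}_g(T_\infty(p))=\sum_j I_g(u_\infty\otimes p'_1\otimes\cdots\otimes p'_j)$. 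The difference of the two full diagrams, after sliding the bottom row of the $T_\infty$-diagram up past all the internal vertices by repeated use of the Yang--Baxter equation (exactly as in the induction step of Proposition~\ref{pr:zushi}), collapses to the single extra row of vertices $g(\xi^{(i)}\otimes p_i)$, because $R$-invariance of $I_g$ (Lemma~\ref{le:chiral}(ii), valid precisely for the $g$'s in Theorem~\ref{th:main}) lets all the interior contributions cancel in pairs.

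More carefully, I would argue by induction on $L$, paralleling the proof of Proposition~\ref{pr:zushi}. For $L=1$ both sides equal $g(u_\infty\otimes p_1)$ since $T_\infty(p_1)=u_\infty\otimes\cdots$ trivially (using (\ref{eq:uka})) and $\xi^{(1)}=u_\infty$. For the inductive step, write ${\mathcal E}_g(p_1\otimes\cdots\otimes p_L)={\mathcal E}_g(p_1\otimes\cdots\otimes p_{L-1})+I_g(u_\infty\otimes p_1\otimes\cdots\otimes p_L)$ and similarly for $T_\infty(p)$; the point is that $T_\infty$ applied to $p_1\otimes\cdots\otimes p_{L-1}$ is \emph{not} quite the restriction of $T_\infty(p)$, so I would instead compare $T_\infty$ of the whole state with the diagram, using that the carrier $\xi^{(L)}$ entering the last step of the $T_\infty$-strip is the same carrier that appears after processing $p_1,\dots,p_{L-1}$, and that $I_g(\xi^{(L)}\otimes p_L)$ differs from $I_g(u_\infty\otimes p'_1\otimes\cdots\otimes p'_{L-1}\otimes p_L)$ only by the extra row, again by $R$-invariance of $I_g$. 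The combinatorial heart is thus a bookkeeping of two stacked Yang--Baxter rectangles whose common boundary is the state $p$.

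The main obstacle I anticipate is the careful handling of the ``$u_\infty$'' columns and the finiteness/stabilization issues: one must check that all the infinite tails (the $p_0=u_\infty$ convention, the passage from $T_l$ to $T_\infty$, and the stabilization in Lemma~\ref{le:ato}) are compatible, so that the telescoping sum is literally finite and the cancellations are between genuinely equal finite integers rather than formal. In particular I would verify, via (\ref{buu}), (\ref{zuu}), (\ref{uuI}) and Remark~\ref{re:u}, that appending extra $u_\infty$'s on the left changes neither ${\mathcal E}_g$ nor the right-hand side of (\ref{edif}), which licenses working with a single fixed long diagram; and that for the excluded generalized local energies ($v^\ast_1,\dots,v^\ast_{n-2}$, and $w_{n-1}$ on its own) the argument genuinely breaks because $I_g$ fails to be $R$-invariant (Remark~\ref{re:chiral}), explaining why the hypothesis on $g$ is exactly the one in the statement.
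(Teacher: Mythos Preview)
Your proposal is essentially the paper's proof: combine Proposition~\ref{pr:zushi}, the $R$-invariance of $I_g$ from Lemma~\ref{le:chiral}(ii), the recursion (\ref{eig}), and the extra-$u_\infty$ identity (\ref{uuI}). One small correction and one streamlining remark are in order.

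First, your claim that ``$T_\infty$ applied to $p_1\otimes\cdots\otimes p_{L-1}$ is not quite the restriction of $T_\infty(p)$'' is false: since the carrier passes left to right, $T_\infty(p_1\otimes\cdots\otimes p_{L-1})=p'_1\otimes\cdots\otimes p'_{L-1}$ exactly. This is not a real obstacle.

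Second, the induction on $L$ you sketch does reduce to showing
\[
I_g(u_\infty\otimes p_1\otimes\cdots\otimes p_L)-I_g(u_\infty\otimes p'_1\otimes\cdots\otimes p'_L)=g(\xi^{(L)}\otimes p_L),
\]
and this is where the extra $u_\infty$ (which you flag only under ``obstacles'') is not optional but the crux. If you apply $R$-invariance directly to $u_\infty\otimes p_1\otimes\cdots\otimes p_{L-1}$ you end up needing $I_g(p'_1\otimes\cdots\otimes p'_L)=I_g(u_\infty\otimes p'_1\otimes\cdots\otimes p'_L)$, which is false. The paper instead prepends a second $u_\infty$ from the start: draw the Proposition~\ref{pr:zushi} diagram for $T_\infty(p)$ and stack the $T_\infty$-row on top, obtaining a single diagram with inputs $u_\infty,u_\infty,p_1,\ldots,p_L$. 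The bullet vertices sum to ${\mathcal E}_g(T_\infty(p))$, the circle vertices to $\sum_i g(\xi^{(i)}\otimes p_i)$, and by Lemma~\ref{le:chiral}(ii) their total equals $\sum_j I_g(u_\infty\otimes u_\infty\otimes p_1\otimes\cdots\otimes p_j)$, which by (\ref{uuI}) and (\ref{eig}) is ${\mathcal E}_g(p)$. No induction is needed. Your plan becomes a clean proof once you promote (\ref{uuI}) from a technical check to the main device.
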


\begin{proof}
The following proof simplifies the one for Proposition 4.6
in Ref. \cite{KSY} in that the assumption 
$l_1\ge \cdots \ge l_L$ is not needed.
We illustrate it for $L=3$.
Set $p = p_1\otimes p_2\otimes p_3$ and 
$T_\infty(p) = p'_1\otimes p'_2\otimes p'_3$.
Then, Proposition \ref{pr:zushi} tells that 
${\mathcal E}_g(T_\infty(p))$ 
is the sum of $g$ at all $\bullet$ in the following diagram. 
\begin{equation*}
\begin{picture}(150,100)(-30,0)
\unitlength 0.25mm

\put(-29,126){$u_\infty$}
\put(16,126){$u_\infty$}
\put(48,126){$p_1$}
\put(83,126){$p_2$}
\put(118,126){$p_3$}

\put(13,82){$\bullet$}
\put(14,49){$\bullet$}
\put(14.5,14){$\bullet$}
\put(30.5,65){$\bullet$}
\put(31,31){$\bullet$}
\put(47.5,48){$\bullet$}

\put(36.5,106){$\circ$}
\put(54,89){$\circ$}
\put(71.5,71){$\circ$}

\put(30,120){\line(1,-1){120}}
\put(-18,120){\line(1,-1){120}}
\put(59,59){$p'_3$}
\put(120,120){\line(-1,-1){49}}
\put(0,0){\line(1,1){56}}

\put(85,120){\line(-1,-1){32}}
\put(41,76){$p'_2$}
\put(3,38){\line(1,1){36}}
\qbezier(3,32)(0,35)(3,38)
\put(35,0){\line(-1,1){32}}

\put(50,120){\line(-1,-1){15}}
\put(23,94){$p'_1$}
\put(3,73){\line(1,1){18}}
\qbezier(3,67)(0,70)(3,73)
\put(70,0){\line(-1,1){67}}
\end{picture}
\end{equation*}
On the other hand, the right hand side of 
(\ref{edif}) is equal to the sum of $g$ 
at all $\circ$.
Thus from Lemma \ref{le:chiral} (ii), we find 
\begin{eqnarray*}
{\mathcal E}_g(T_\infty(p))+
\sum_{i=1}^Lg(\xi^{(i)}\otimes p_i)
&=& \sum_{1\le j\le 3}I_g(u_\infty\otimes u_\infty\otimes 
p_1\otimes \cdots \otimes p_j)\\
&\overset{(\ref{uuI})}{=}& \sum_{1\le j\le 3}I_g(u_\infty\otimes 
p_1\otimes \cdots \otimes p_j)
\overset{(\ref{eig})}{=}
{\mathcal E}_g(p),
\end{eqnarray*}
completing the proof.
\end{proof}

{\it Proof of Theorem \ref{th:main}}.
{}From Remark \ref{re:u}, 
$T_\infty^t(p) = u_{l_1}\otimes \cdots \otimes u_{l_L}$
for $t\ge L$.
For such a state ${\mathcal E}_g=0$ and $\rho_g=0$ hold 
due to (\ref{zuu}) and (\ref{rr1})--(\ref{rr4}), respectively.
Thus it suffices to show 
\[
{\mathcal E}_g(p) - {\mathcal E}_g(T_\infty(p))
= \rho_g(p) - \rho_g(T_\infty(p)).
\]
By applying (\ref{edif}) and (\ref{rhog}) to the 
left and the right hand sides, respectively, this becomes
\[
\sum_{i=1}^Lg(\xi^{(i)}\otimes p_i)
= \sum_{i=1}^L(\gamma_g(p_i)
+\mathfrak{a}(p'_i)- \gamma_g(p'_i)),
\]
where we have set
$T_\infty(p) = p'_1\otimes \cdots \otimes p'_L$.
(This was denoted by
$p^1_1\otimes \cdots \otimes p^1_L$ in (\ref{rhog}).)
{}From the definition of $\xi^{(i)}$ in (\ref{edif}),
we have $\xi^{(i)}\otimes p_i \simeq p'_i\otimes \xi^{(i+1)}$.
Therefore Lemma \ref{le:ato} tells that 
$g(\xi^{(i)}\otimes p_i)
= \gamma_g(p_i)+\mathfrak{a}(p'_i)- \gamma_g(p'_i)$ 
holds for each $i$, finishing the proof.
\hfill$\square$ 

\subsection{$\ast$-transformed correspondence}\label{ss:cp}
Let us give an analogous result on $g=v^\ast_a\,(1 \le a \le n-2)$
which is not included in Theorem \ref{th:main}.
Our presentation in this subsection is brief since  
the essential features are the same as the previous case. 
To state the result, let us introduce a $\ast$-transformed
generalized energy and a $\ast$-transformed $D^{(1)}_n$
cellular automaton.
(See (\ref{sss}) and (\ref{sta}) for the original definition of $\ast$.)

Let $u^\ast_l=(0,\ldots, 0,l) \in B_l$. 
See (\ref{ul}).
The $\ast$-transformed generalized energy 
${\mathcal E}^\ast_{v^\ast_a}(p_L \otimes \cdots \otimes p_1)$ 
of an element $p_L \otimes \cdots \otimes p_1 \in 
B_{l_L}\otimes \cdots \otimes B_{l_1}$
is the sum of $v^\ast_a$ for all the vertices 
in the following diagram ($L=3$ example):
\begin{equation*}
\begin{picture}(80,65)(-50,1)
\put(-110,25)
{${\mathcal E}^\ast_{v^\ast_a}(p_3\otimes p_2 \otimes p_1) \;\;= $}

\put(50,55){$u^\ast_\infty$}
\put(29,55){$p_1$}
\put(13,55){$p_2$}
\put(-4,55){$p_3$}

\put(50,0){\line(-1,1){51}}
\put(16,51){\line(1,-1){32}}
\put(33,51){\line(1,-1){15}}
\qbezier(48,19)(50,17)(48,15)
\qbezier(48,32)(50,34)(48,36)
\put(33,0){\line(1,1){15}}
\put(16,0){\line(1,1){32}}
\put(-1,0){\line(1,1){51}}

\end{picture}
\end{equation*}
Compare this with Proposition \ref{pr:zushi}.
One can show that 
${\mathcal E}^\ast_{v^\ast_a}$ is well defined and $R$-invariant.

The $\ast$-transformed $D^{(1)}_n$ cellular automaton is the 
dynamical system on  $B_{l_L}\otimes \cdots \otimes B_{l_1}$
endowed with the commuting time evolutions $T^\ast_l (l\ge 1)$
defined by 
$p_L \otimes \cdots \otimes p_1\otimes u^\ast_l 
\simeq \xi \otimes T^\ast_l(p_L \otimes \cdots \otimes p_1)$.
($\xi \in B_l$ is determined by this relation.)
$T^\ast_\infty$ is well defined.
Moreover,  under the time evolution 
$p'_L \otimes \cdots \otimes p'_1
=T^\ast_\infty(p_L \otimes \cdots \otimes p_1)$,  
the equality $p'_j=u^\ast_{l_j}$ is valid for $1 \le j \le k+1$
if $p_j=u^\ast_{l_j}$ for $1 \le j \le k$, which 
is parallel with Remark \ref{re:u}. 
For $\zeta \in B_l$, introduce 
the charge conjugation of (\ref{all})--(\ref{gva}) by
\begin{eqnarray*}
{\mathfrak a}^\ast(\zeta) &:=& 
{\mathfrak a}(\zeta^\ast) = 
2\zeta_1+\zeta_2+\cdots + \zeta_n + \overline{\zeta}_n
+ \cdots + \overline{\zeta}_2,\\
\gamma^\ast_{v^\ast_a}(\zeta) &:=& \gamma_{v_a}(\zeta^\ast) = 
\zeta_1+\zeta_{a +1}+\cdots + \zeta_n + \overline{\zeta}_n
+ \cdots + \overline{\zeta}_2 \;\;(1 \le a \le n-2).
\end{eqnarray*}
Writing the time evolutions of 
a state $p=p_L \otimes \cdots \otimes p_1 \in 
B_{l_L}\otimes \cdots \otimes B_{l_1}$ as 
$(T^\ast_\infty)^t(p) =p^t_L \otimes \cdots \otimes p^t_1$,
we define the counting function ($1 \le a \le n-2$):
\begin{equation*}
\unitlength 0.1in
\begin{picture}( 12.9500, 12.000)(  -5.0500,-17.300)
\put(-17,-11){\begin{math}\displaystyle
\rho^\ast_{v^\ast_a}(p)
=\sum_{j=1}^L\gamma^\ast_{v^\ast_a}(p_j)
+ \sum_{t\ge 1}\sum_{j=1}^L{\mathfrak a}^\ast(p^t_j)\;\; =
\end{math}}

\special{pn 8}%
\special{pa 1942 600}%
\special{pa 982 600}%
\special{pa 982 760}%
\special{pa 1942 760}%
\special{pa 1942 600}%
\special{fp}%
\put(13.8100,-6.8000){\makebox(0,0){$\cdots$}}%
\put(18.3300,-6.8800){\makebox(0,0){$p_1$}}%
\put(16.3300,-6.8800){\makebox(0,0){$p_2$}}%
\put(11.20,-6.8800){\makebox(0,0){$p_L$}}%
%
\special{pn 8}%
\special{sh 0.300}%
\special{pa 1942 760}%
\special{pa 982 760}%
\special{pa 982 1720}%
\special{pa 1942 760}%
\special{ip}%
\end{picture}%
\end{equation*}
The counting is done by $\gamma^\ast_{v^\ast_a}$ for the top row and 
by ${\mathfrak a}^\ast$ 
for the SE quadrant generated by $T^\ast_\infty$
beneath it.
Thanks to the commutativity of $\ast$ and the combinatorial $R$
(Prop.4.4 in Ref. \cite{KOTY1}),  
Theorem \ref{th:main} implies the following:

\begin{proposition}\label{pr:main2}
For any state $p \in B_{l_L}\otimes \cdots \otimes B_{l_1}$,
the following equality is valid:
\begin{equation*}
{\mathcal E}^\ast_{v^\ast_a}(p) = \rho^\ast_{v^\ast_a}(p) \quad
(1 \le a \le n-2).
\end{equation*}
\end{proposition}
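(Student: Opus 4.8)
The plan is to \emph{derive} Proposition \ref{pr:main2} from Theorem \ref{th:main} by transporting the whole setup through the involution $\ast$, rather than redoing the proof. The guiding observation is that $\ast$, acting entrywise on a tensor product as in (\ref{sss}) together with the reversal of the factors, intertwines the combinatorial $R$ with itself (Prop.~4.4 in Ref.~\cite{KOTY1}). Concretely, if $p = p_L \otimes \cdots \otimes p_1 \in B_{l_L}\otimes \cdots \otimes B_{l_1}$, then $p^\ast := p_1^\ast \otimes \cdots \otimes p_L^\ast \in B_{l_1}\otimes \cdots \otimes B_{l_L}$, and applying $R$ at adjacent positions in $p$ corresponds under $\ast$ to applying $R$ at the mirror-image positions in $p^\ast$. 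I would first record this bookkeeping carefully, because everything else is a consequence of it.

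The key steps are then: (1) Show that the diagram defining ${\mathcal E}^\ast_{v^\ast_a}(p_L \otimes \cdots \otimes p_1)$ is the $\ast$-mirror of the diagram defining ${\mathcal E}_{v_a}(p^\ast)$ in Proposition \ref{pr:zushi}, with the seed $u_\infty$ replaced by $u^\ast_\infty = u_\infty^\ast$; using the row of Table \ref{tab:one} that gives $v_a^\ast$ as the $\ast$-transform of $v_a$, deduce ${\mathcal E}^\ast_{v^\ast_a}(p) = {\mathcal E}_{v_a}(p^\ast)$. Here one must check that the well-definedness (finiteness in the $u_\infty$ limit) and $R$-invariance stated after the definition of ${\mathcal E}^\ast_{v^\ast_a}$ also follow by transport from Lemma \ref{le:ato}, the comment following it, and Remark \ref{re:rinv}; this is where I would be most careful, since the stabilization argument in Lemma \ref{le:ato} is phrased asymmetrically (large $\xi_1$, i.e.\ large $x_1$), and under $\ast$ the relevant coordinate becomes $\overline{x}_1$, so one should verify the $\ast$-image of that argument goes through verbatim. (2) Show that the $\ast$-transformed cellular automaton is the $\ast$-mirror of the original one: from $p_L \otimes \cdots \otimes p_1 \otimes u^\ast_l \simeq \xi \otimes T^\ast_l(p)$ and the intertwining property, deduce $(T^\ast_\infty(p))^\ast = T_\infty(p^\ast)$, and hence $((T^\ast_\infty)^t(p))^\ast = T_\infty^t(p^\ast)$ for all $t$; the analogue of Remark \ref{re:u} for $T^\ast_\infty$, already stated in the text, is exactly the $\ast$-image of Remark \ref{re:u}. (3) Match the counting functions: since ${\mathfrak a}^\ast(\zeta) = {\mathfrak a}(\zeta^\ast)$ and $\gamma^\ast_{v^\ast_a}(\zeta) = \gamma_{v_a}(\zeta^\ast)$ by definition, and using the identification of time-evolution patterns from step (2) (the SE quadrant for $T^\ast_\infty$ becomes the SW quadrant for $T_\infty$ after the mirror), conclude $\rho^\ast_{v^\ast_a}(p) = \rho_{v_a}(p^\ast)$ directly from (\ref{rhog})--(\ref{rr1}).

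Combining the three identities with Theorem \ref{th:main} applied to $p^\ast$ gives
\[
{\mathcal E}^\ast_{v^\ast_a}(p) = {\mathcal E}_{v_a}(p^\ast) = \rho_{v_a}(p^\ast) = \rho^\ast_{v^\ast_a}(p),
\]
which is the assertion. The main obstacle I anticipate is not any single deep point but the \emph{coherence of the mirror dictionary}: one must be sure that the left-seeded, left-pushing conventions used throughout Sections \ref{sec:ge} and \ref{sec:sca} (the seed $u_\infty$ on the left in (\ref{efp}) and in the definition of $T_l$) correspond under $\ast$ precisely to the right-seeded, right-pushing conventions built into ${\mathcal E}^\ast_{v^\ast_a}$ and $T^\ast_l$, including the placement of $\xi$ versus $T^\ast_l(p)$. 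Once that dictionary is pinned down, the proof is a short formal deduction and can be presented in a few lines, as the text itself suggests by saying the essential features are the same as the previous case.
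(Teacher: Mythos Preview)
Your proposal is correct and follows essentially the same approach as the paper: the paper's entire proof is the single remark that, thanks to the commutativity of $\ast$ with the combinatorial $R$ (Prop.~4.4 in Ref.~\cite{KOTY1}), Theorem \ref{th:main} implies Proposition \ref{pr:main2}. Your three steps are precisely the unpacking of that sentence into the identities ${\mathcal E}^\ast_{v^\ast_a}(p) = {\mathcal E}_{v_a}(p^\ast)$, $(T^\ast_\infty(p))^\ast = T_\infty(p^\ast)$, and $\rho^\ast_{v^\ast_a}(p) = \rho_{v_a}(p^\ast)$, which is exactly the mirror dictionary the paper is invoking.
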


This completes our interpretation of the ($\ast$-transformed) 
generalized energies associated with all the generalized local energies 
in terms of the ($\ast$-transformed) $D^{(1)}_n$ cellular automaton. 

\section{Connection with combinatorial Bethe ansatz}\label{sec:re}

Combinatorial Bethe ansatz was initiated by  
Kerov-Kirillov-Reshetikhin (KKR)  \cite{KKR, KR}
to establish a fermionic formula of the Kostka-Foulkes polynomials
with the invention of rigged configurations and the KKR bijection.
Their fermionic formula generalized Bethe's formula  \cite{Be} 
for some simplest Kostka numbers that originates in the completeness issue.

Rigged configurations are combinatorial analogue of 
solutions to the Bethe equations.
The KKR bijection maps them to the combinatorial analogue of 
Bethe vectors which may be viewed as elements of (a subset of)
$B_{l_1}\otimes \cdots \otimes B_{l_L}$.
The combinatorial Bethe ansatz has flourished in  
the fermionic formulas for general affine Lie algebras  \cite{HKOTY,O,OSS,SS},
the solution of the initial value problem of  
integrable $A^{(1)}_n$ cellular automata 
by the inverse scattering method \cite{KOSTY}
and a connection with 
the classical soliton theory \cite{JM} 
via ultradiscrete tau functions \cite{KSY} and so forth.
Our aim in this section is to present an inverse scattering 
formalism of the $D^{(1)}_n$ cellular automaton 
and to conjecture explicit formulas for some 
generalized energies in the form of ultradiscrete tau functions 
associated with the $D^{(1)}_n$ rigged configurations.

\subsection{Inverse scattering formalism}\label{ss:ist}
Set 
${\mathcal P}_+ = \{p \in  B_{l_1} \otimes \cdots \otimes B_{l_L}
\mid {\tilde e}_ip=0 
\text{ for } i=1,2,\ldots, n\}$.
A state belonging to ${\mathcal P}_+$ is 
called {\em highest}.
It is known 
that there is a bijection 
between ${\mathcal P}_+$ 
and the set of {\em rigged configurations} \cite{OSS, SS}.
Consider a set 
\begin{equation}\label{S}
S = \{(a_i,j_i,r_i)\in \{1,2,\ldots, n\}\times 
\Z_{\ge 1}\times \Z_{\ge 0}
\mid i=1,2,\ldots, N\},
\end{equation}
where $N\ge 0$ is arbitrary and 
each triplet $s=(a,j,r)$ called {\em string} possesses 
color, length and rigging which will be 
denoted by ${\rm cl}(s)=a, {\rm lg}(s)=j$
and ${\rm rg}(s)=r$, respectively.\footnote{Colors 
$1,2,\ldots, n$ of strings in 
rigged configurations should not be confused with 
colors $1,2,\ldots, n, \overline{n},\ldots, \overline{2},
\overline{1}$ of (anti)-particles.}
$S$ is a rigged configuration if 
${\rm rg}(s) \le p^{({\rm cl}(s))}_{{\rm lg}(s)}$
is satisfied for all $s\in S$.
Here  
$p^{(a)}_j = \delta_{a,1}\sum_{k=1}^L\min(j,l_k)
-\sum_{t\in S}C_{a, {\rm cl}(t)}
\min(j,{\rm lg}(t))$, 
where 
$(C_{a,b})_{1\le a,b \le n}$ is the Cartan matrix of $D_n$.
Note that  
$ p^{({\rm cl}(s))}_{{\rm lg}(s)}\ge 0$ 
has to be satisfied for all $s \in S$, which 
imposes a stringent condition on the set 
$\{(a_i,j_i)\mid i=1,\ldots, N\}$.
Set ${\rm RC}=\{S: \text{rigged configuration}\}$.

\begin{theorem}[Refs. \cite{OSS} and \cite{SS}]\label{th:ss}
There is a bijection $\Phi:  {\mathcal P}_+ \rightarrow {\rm RC}$.
\end{theorem}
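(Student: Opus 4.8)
The plan is to construct $\Phi$ and its inverse by the Kerov--Kirillov--Reshetikhin (KKR) scheme adapted to $D^{(1)}_n$, proceeding by induction on the total size $N=l_1+\cdots+l_L$. First I would reduce to the case of single boxes: the classical crystal embedding $B_l\hookrightarrow B_1^{\otimes l}$ together with the column--splitting operation on rigged configurations sends ${\mathcal P}_+\subset B_{l_1}\otimes\cdots\otimes B_{l_L}$ to its image inside $B_1^{\otimes N}$, and sends ${\rm RC}$ to the corresponding set of rigged configurations with all $l_k=1$; compatibility with the rigging constraint ${\rm rg}(s)\le p^{({\rm cl}(s))}_{{\rm lg}(s)}$ is immediate since $\min(j,l_k)$ is additive when $l_k$ is broken into ones. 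Since $\Phi$ will intertwine the splitting maps on the two sides, it then suffices to build a bijection between highest paths in $B_1^{\otimes N}$ and the all-ones ${\rm RC}$.

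Next I would define the box--removal map $\delta$ on ${\rm RC}$, returning a letter $b\in B_1$ and a smaller rigged configuration. Starting at color $1$ one selects a singular string (rigging equal to the vacancy number $p^{(1)}_j$) of shortest length; recursively, at color $a+1$ one selects a singular string of length at least the one chosen at color $a$. The chain runs through $1,2,\ldots,n-2$ and is then allowed to branch into the two spin nodes $n-1$ and $n$ --- into both, into one, or to terminate --- according to the $D_n$ selection rules, and the place and branch where it stops records $b$. Each selected string is shortened by one box and all riggings are reset so as to preserve coriggings. Iterating $\delta$ a total of $N$ times reads off letters $b_1,\ldots,b_N$ and one sets $\Phi^{-1}(S)=b_1\otimes\cdots\otimes b_N$; the selection rules force the output to be highest. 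Well-definedness --- that the shortened configuration still satisfies all rigging bounds and keeps nonnegative vacancy numbers --- is the standard convexity estimate: $j\mapsto p^{(a)}_j$ is concave, so the change in a vacancy number caused by deleting one box from a length-$j$, color-$a$ string is controlled by the lengths selected on the adjacent nodes.

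To prove bijectivity I would exhibit the inverse box--adding map $\delta^{-1}$ (adjoin a letter $b$ on the left, lengthen the corresponding chain of strings by one box, re-rig) and verify string by string that $\delta\circ\delta^{-1}={\rm id}$ and $\delta^{-1}\circ\delta={\rm id}$; equivalently, show that $\Phi$ intertwines the left--hat and left--split maps on ${\mathcal P}_+$ with $\delta$ and column--split on ${\rm RC}$, and conclude by the inductive hypothesis, the base case $N=0$ being trivial since both sides are singletons.

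I expect the main obstacle to be the branch node of the $D_n$ Dynkin diagram. Because the letters $n,\overline n$ --- equivalently the constraint $\zeta_n\overline\zeta_n=0$ in $B_l$ --- let the box--removal chain proceed ambiguously near nodes $n-1$ and $n$, the selection rule must be pinned down so that (i) it outputs exactly the correct element of $B_1$, (ii) the vacancy numbers at the spin nodes stay nonnegative, and (iii) the corresponding box--adding move is unambiguous. Unlike type $A^{(1)}_n$, where the chain is a single weakly increasing sequence of string lengths, here it is a small tree, and the re-rigging must be made consistent across the two branches; pushing this case analysis through, together with the convexity bookkeeping above, is the technical heart of \cite{OSS,SS}.
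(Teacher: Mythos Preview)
The paper does not prove this theorem at all; it is quoted as a result of \cite{OSS} and \cite{SS} and used as a black box.  Immediately after the statement the authors simply note that ``an explicit algorithm to determine the image of $\Phi^{\pm 1}$ is known'' and proceed to Theorem~\ref{th:ist}.  So there is no proof in the paper to compare your proposal against.

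That said, your outline is broadly the shape of the argument in the cited references: reduce to $B_1^{\otimes N}$ by column splitting, then iterate a box--removal map $\delta$ that selects a chain of singular strings and records a letter of $B_1$, with the inverse given by box--adding and well-definedness controlled by convexity of the vacancy numbers.  One point you understate: in the $D^{(1)}_n$ algorithm the selection chain does not merely reach the spin nodes and stop or branch there.  After passing through colors $n-1$ and/or $n$ it may \emph{turn around} and continue back down through $n-2,n-3,\ldots$, and the color at which this return leg terminates is what produces the barred letters $\overline{n},\overline{n-1},\ldots,\overline{1}$ (and in particular the bound pair $\overline{1}$).  Without this doubling back, the image of $\delta$ cannot exhaust $B_1$, so your description ``into both, into one, or to terminate'' is incomplete as stated.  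The genuine technical work in \cite{OSS,SS} lies in making this two-way chain and its inverse unambiguous and in checking the vacancy bounds survive on both legs; your sketch gestures at this but does not yet contain it.
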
 

An explicit algorithm to determine the 
image of $\Phi^{\pm 1}$ is known.
It is a $D^{(1)}_n$ analogue of 
the Kerov-Kirillov-Reshetikhin bijection \cite{KR} for 
$A^{(1)}_n$,
which plays a central role in the combinatorial Bethe ansatz.
Our convention here is the one adopted in Ref. \cite{KOSTY}.

For any highest state $p \in {\mathcal P}_+$,
its time evolution $T_l(p)$ is again highest.
Thus $T_l$ induces a time evolution on ${\rm RC}$ via $\Phi$.
Let $L$ be large enough and assume 
the situation in (\ref{eq:uka}).
Then we have  
\begin{theorem}\label{th:ist}
$\Phi(T_l(p)) = {\tilde T}_l \Phi(p)$ holds, where 
${\tilde T}_l: \{(a_i,j_i,r_i)\} \mapsto
\{(a_i,j_i,r_i+\delta_{a_i,1}\min(j_i,l))\}$
is a linear flow on rigged configurations.
\end{theorem}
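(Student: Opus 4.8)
The plan is to prove the linearization $\Phi(T_l(p)) = \tilde T_l\,\Phi(p)$ by reducing the statement about the combinatorial $R$ on $B_l\otimes(B_{l_1}\otimes\cdots\otimes B_{l_L})$ to the known structure of the KKR-type bijection $\Phi$, exploiting that $u_l$ (and, more generally, the vacuum box $B_l$ with classical part $u_l$) corresponds under $\Phi$ to a rigged configuration with no strings. Concretely, the first step is to record how the bijection $\Phi$ behaves under left tensoring by the empty box: if $p\in{\mathcal P}_+$ has $\Phi(p)=S$, then $\Phi(u_l\otimes p)$ should again be $S$ (up to the trivial reindexing of the vacancy numbers coming from the extra factor $B_l$, which only shifts $p^{(1)}_j$ by $\min(j,l)$ uniformly and hence changes no rigging). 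This is the combinatorial heart and I expect it follows from the ``first column'' / ``adding a singleton box'' steps of the $D^{(1)}_n$ KKR algorithm exactly as in the $A^{(1)}_n$ case treated in Ref.~\cite{KOSTY}; one must check that inserting a row of $l$ copies of the letter $1$ at the left creates no new string and leaves all riggings intact.

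The second step is to unwind the definition of $T_l$. By definition $u_l\otimes p_1\otimes\cdots\otimes p_L\simeq p'_1\otimes\cdots\otimes p'_L\otimes\xi$ under the combinatorial $R$, and under the assumption \eqref{eq:uka} one has $\xi=u_l$, so that after applying $\Phi$ to the highest state $p'_1\otimes\cdots\otimes p'_L\otimes u_l$ and using Step~1 on the \emph{right} factor $u_l$ (tensoring by an empty box on the right likewise produces no string, again shifting vacancy numbers by $\min(j,l)$), one obtains $\Phi(T_l(p))$ from $\Phi(p'_1\otimes\cdots\otimes p'_L\otimes u_l)$. The key input here is that $\Phi$ is insensitive to combinatorial $R$ moves among the $B_{l_j}$ and to reordering the empty box from the left end to the right end; this is the $D^{(1)}_n$ analogue of the fact, used throughout combinatorial Bethe ansatz, that the KKR bijection factors through the classical crystal structure and is compatible with the $R$-matrix (Theorem~\ref{th:ss} gives the bijection; its $R$-matrix compatibility in the form needed is part of the package in Refs.~\cite{OSS,SS}). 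Thus $T_l$ on ${\rm RC}$ is identified with: ``reattach an empty box on the right, slide it to the left, strip it off'', and the content of the theorem is the explicit evaluation of the net effect on riggings.

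The third step is the actual computation of that net effect, i.e.\ showing it is precisely $r_i\mapsto r_i+\delta_{a_i,1}\min(j_i,l)$. Here the mechanism is the same as in Ref.~\cite{KOSTY}: attaching the empty box $B_l$ on the right changes the vacancy numbers by $p^{(a)}_j\mapsto p^{(a)}_j+\delta_{a,1}\min(j,l)$ (since $u_l$ contributes a single length-$l$ row only to the color-$1$ partition side of the bookkeeping, via the $\delta_{a,1}\sum_k\min(j,l_k)$ term in the vacancy-number formula), while the stripping-off step, performed on a \emph{configuration whose strings are untouched}, decreases the coriggings back, the upshot being that the riggings of color-$1$ strings of length $j$ are raised by $\min(j,l)$ and all others are unchanged. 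I would present this as: vacancy numbers behave additively under appending the box, the string data is preserved by Steps~1--2, and ${\rm rg}=p-{\rm corg}$ with ${\rm corg}$ unchanged forces the stated shift.

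The main obstacle is Step~1, the claim that left/right tensoring by an empty box induces the identity on the set of strings (and the clean shift on vacancy numbers) in the $D^{(1)}_n$ KKR algorithm. In type $A^{(1)}_n$ this is routine, but the $D^{(1)}_n$ algorithm has the extra ``spin'' nodes $n-1,n$ and the folding behaviour near them, so one has to verify that adding a row of $1$'s never triggers a box-creation at a node $\neq 1$ and never modifies a rigging. I expect this to be checkable directly from the $D^{(1)}_n$ KKR algorithm as formulated in Ref.~\cite{KOSTY}, and the rest of the argument is then bookkeeping of vacancy numbers parallel to the $A^{(1)}_n$ treatment there.
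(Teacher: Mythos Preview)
Your outline is essentially the approach the paper takes: the paper's proof is a two-line citation, saying ``The proof uses Theorem~8.6 of Ref.~\cite{SS} and is similar to that of Proposition~2.6 in Ref.~\cite{KOSTY} for type $A^{(1)}_n$.'' Your Steps~1--3 are exactly what that parallel argument unpacks to---attach the empty box, use the behaviour of $\Phi$ under $R$-moves to slide it across, strip it off, and read the rigging shift from the vacancy-number bookkeeping.

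Two small corrections. First, the technical input you flag as the ``main obstacle'' (that $\Phi$ is insensitive to $R$-moves and that tensoring by $u_l$ behaves as expected on the rigged-configuration side) is precisely what the paper imports as Theorem~8.6 of~\cite{SS}; you should cite that rather than hope to verify it directly from the algorithm. Second, the $D^{(1)}_n$ KKR algorithm is formulated in Refs.~\cite{OSS,SS}, not in~\cite{KOSTY} (which treats only $A^{(1)}_n$); your closing sentence points to the wrong reference. With those fixes your proposal matches the paper's intended argument.
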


\begin{proof}
The proof uses Theorem 8.6 of Ref. \cite{SS}
and is similar to that of Proposition 2.6 in 
Ref. \cite{KOSTY} for type $A^{(1)}_n$.
\end{proof} 

Thus the composition $\Phi^{-1}\circ {\tilde T}_l \circ \Phi$
linearizes the original time evolution $T_l$ and 
solves the initial value problem in the $D^{(1)}_n$ cellular automaton 
by the inverse scattering method.
See Ref. \cite{KOSTY} for an analogous result for $A^{(1)}_n$.

\subsection{Conjecture on ultradiscrete tau functions}\label{ss:tau}
For a rigged configuration $S$ (\ref{S}), 
let $T \subseteq S$ be a (possibly empty) subset of $S$.
In general, $T$ is no longer a rigged configuration.
We introduce the piecewise linear functions
($0 \le k \le L$ and $0 \le d \le n$)
\begin{eqnarray*}
c(T) &=& \frac{1}{2}\sum_{s,t \in T}C_{{\rm cl}(s),{\rm cl}(t)}
\min({\rm lg}(s), {\rm lg}(t)) 
+ \sum_{s\in T}{\rm rg}(s),\\
c^{(d)}_k(T) &=& c(T)
-\sum_{i=1}^k\sum_{s\in T, {\rm cl}(s)=1}
\min(l_i, {\rm lg}(s))
+\sum_{s\in T, {\rm cl}(s)=d}
{\rm lg}(s)
\end{eqnarray*}
By the definition, the last term in $c^{(d)}_k(T)$  is 
$0$ when $d=0$, and the relation 
\begin{equation}\label{yy}
c^{(d)}_k(T)=c^{(0)}_k(T)|_{{\rm rg}(s)\rightarrow 
{\rm rg}(s)+{\rm lg}(s)\delta_{{\rm cl}(s), d}}
\end{equation}
holds.
Obviously we have 
$c(\emptyset) = c^{(d)}_k(\emptyset) = 0$.
On the other hand,
$c(S)$ is known as the (co)charge of
the rigged configuration $S$ \cite{KR, HKOTY,OSS,SS}.

We define a 
$\Z_{\ge 0}$-valued piecewise linear function on $S$
as follows:
\begin{equation}\label{ttau}
\tau^{(d)}_k(S) = -\min_{T \subseteq S}
\left(c^{(d)}_k(T)\right)
\quad (0 \le k \le L,\;0 \le d \le n).
\end{equation}
For $S$ in (\ref{S}),
the minimum extends over $2^N$ 
candidates and reminds us of the structure of 
tau functions in the theory of solitons \cite{JM}.
In fact, for type $A^{(1)}_n$, analogous functions  
have been identified \cite{KSY} as ultradiscretization of the 
tau functions in KP hierarchy.
Although such an origin is yet to be clarified, 
we call (\ref{ttau}) {\em ultradiscrete tau function}.
Guided by the results in  
$A^{(1)}_n$
and supported by computer experiments, we propose
\begin{conjecture}\label{cj:tau}
For any highest state $p \in {\mathcal P}_+$,
let $S=\Phi(p)$ be the corresponding rigged configuration.
Then, the following equalities hold for 
$p_{[k]}$ (\ref{pk}) with $0 \le k \le L$.
\begin{eqnarray}
\tau^{(0)}_k(S) 
&=& {\mathcal E}_{v_0}(p_{[k]}),
\label{cj1}\\
\tau^{(1)}_k(S)
&=&{\mathcal E}_{v^{\sigma_1}_0}(p_{[k]}),
\label{cj2}\\
 \tau^{(n-1)}_k(S) 
&=&{\mathcal E}_{v^\ast_{n-1}}(p_{[k]}),
\label{cj3}\\
 \tau^{(n)}_k(S)
&=& {\mathcal E}_{v_{n-1}}(p_{[k]}),
\label{cj4}\\
\tau^{(2)}_k(S) 
&=&{\mathcal E}_{w_2}(p_{[k]}) -
{\mathcal E}_{v_0}(p_{[k]}) + \varphi_0(p_{[k]}).
\label{cj5}
\end{eqnarray}
\end{conjecture}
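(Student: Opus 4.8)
The plan is to deduce Conjecture~\ref{cj:tau} from Theorems~\ref{th:main} and~\ref{th:ist} by transporting everything to the level of rigged configurations, where the statement should reduce to a single $\min$-plus identity that one proves by induction, as in the $A^{(1)}_n$ case~\cite{KSY}. Fix a highest state $p$, put $S=\Phi(p)$, and for $0\le k\le L$ write $q^{t}=T_{\infty}^{t}(p_{[k]})$, which is again highest, so that $\Phi(q^{t})=\tilde T_{\infty}^{t}\,\Phi(p_{[k]})$ by Theorem~\ref{th:ist}; recall $\tilde T_{\infty}$ is the linear flow ${\rm rg}(s)\mapsto{\rm rg}(s)+{\rm lg}(s)\,\delta_{{\rm cl}(s),1}$. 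By Theorem~\ref{th:main} and~(\ref{rhog}), the right-hand side of each of~(\ref{cj1})--(\ref{cj4}) equals $\rho_{g}(p_{[k]})=\sum_{j=1}^{k}\gamma_{g}(p_{j})+\sum_{t\ge1}\sum_{j=1}^{k}\mathfrak{a}(q^{t}_{j})$, while the right-hand side of~(\ref{cj5}) equals $\rho_{w_2}(p_{[k]})-\rho_{v_0}(p_{[k]})+\varphi_{0}(p_{[k]})$ (using the corollary ${\mathcal E}_{w_2}=\rho_{w_2}$ of Theorem~\ref{th:main} and an expression of $\varphi_{0}$ of a tensor product through the $p_{j}$). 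Since the dynamical sum is finite and vanishes once $q^{t}=u_{l_1}\otimes\cdots\otimes u_{l_k}$ (the mechanism already exploited in the proof of Theorem~\ref{th:main}), the whole expression is a piecewise-linear function of $S$ and $k$ alone --- provided one first makes explicit how $\Phi(p_{[k]})$ sits inside $S=\Phi(p)$, which is itself part of the work and is exactly what the $k$-dependence of $c^{(d)}_{k}$ in~(\ref{ttau}) is meant to encode.

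The second step is to recognize the $\min$-plus structure. The object to understand is a closed formula, purely in terms of a rigged configuration $S'$, for the total count $\Sigma(S'):=\sum_{j}\mathfrak{a}\bigl(\Phi^{-1}(S')\bigr)$ and, more finely, for $\mathfrak{a}$ of a single box together with the top-row weights $\gamma_{g}(p_{j})$. In the $A^{(1)}_n$ case these are discrete Laplacians --- double differences in the time variable $t$ and the cutoff $k$ --- of the ultradiscrete tau functions, mirroring the KP/DKP dictionary~\cite{JM}; granting the analogous fact here, $\sum_{j}\gamma_{g}(p_{j})+\sum_{t\ge1}\Sigma(\tilde T_{\infty}^{t}\,\cdot\,)$ telescopes twice in $(t,k)$ and collapses to $\tau^{(d)}_{k}(S)$ minus its value on the sorted state, the latter being $0$ since $c^{(d)}_{k}(\emptyset)=0$. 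The four instances $d=0,1,n-1,n$ should match the four chirality-distinguished energies $v_{0},v^{\sigma_1}_{0},v^{\ast}_{n-1},v_{n-1}$ of~(\ref{rr1})--(\ref{rr4}) through the coloring shift~(\ref{yy}): deforming ${\rm rg}(s)\mapsto{\rm rg}(s)+{\rm lg}(s)$ on colour-$d$ strings is the operation that, after telescoping, turns $\gamma_{v_{0}}=\mathfrak{a}$ into $\gamma_{v^{\sigma_1}_0}=0$ and interchanges the $\zeta_{n}$ and $\overline\zeta_{n}$ responsible for $\gamma_{v_{n-1}}$ versus $\gamma_{v^{\ast}_{n-1}}$. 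An alternative organization is induction on $k$: by~(\ref{eig}) and the recursion $c^{(d)}_{k}(T)=c^{(d)}_{k-1}(T)-\sum_{s\in T,\,{\rm cl}(s)=1}\min(l_{k},{\rm lg}(s))$, the conjecture reduces to the per-column identity $\tau^{(d)}_{k}(S)-\tau^{(d)}_{k-1}(S)=I_{g}(u_{\infty}\otimes p_{1}\otimes\cdots\otimes p_{k})$, which one hopes is more tractable than the global statement.

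The hard part will be the combinatorial core: computing $\Sigma$, the $\gamma_{g}$'s, and the relation $\Phi(p_{[k]})\leftrightarrow S$, and then verifying the $\min$-plus identity, all require fine control of the $D^{(1)}_n$ Kerov--Kirillov--Reshetikhin bijection~\cite{OSS,SS,KOSTY} with its boxing/unboxing steps and the fork at the $n$-th node. The only realistic route is induction on the size of $S$ (say on $\sum_{s}{\rm lg}(s)$, or on the number of strings), peeling off one box or one string at a time and checking that both sides of~(\ref{cj1})--(\ref{cj5}) transform compatibly --- the same scheme as in~\cite{KSY}, but with substantially more cases, several of them (bound pairs $\overline1$, the $n\leftrightarrow\overline n$ alternative, the special node) having no $A^{(1)}_n$ counterpart. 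I expect~(\ref{cj5}) to be the genuine bottleneck: $w_{2}$ is not among the energies with a clean corner-transfer-matrix meaning, ${\mathcal E}_{w_2-v_2}$ is $R$-invariant only as a whole (cf.\ Lemma~\ref{le:chiral} and Remark~\ref{re:chiral}), and the correction $\varphi_{0}(p_{[k]})$ must be reconciled with the $d=2$ shift; a separate lemma expressing $\varphi_{0}$ in terms of the colour-$2$ string content of $S$ will presumably be needed, and --- as the Introduction suggests --- a conceptually transparent proof of~(\ref{cj5}) may only emerge together with the bilinearization of the tropical $R$~\cite{KOTY2} and the ultradiscrete DKP hierarchy.
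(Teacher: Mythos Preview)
The statement you were given is a \emph{conjecture} in the paper, not a theorem: the authors offer no proof, only motivation from the $A^{(1)}_n$ case \cite{KSY} and numerical evidence (the example immediately following the conjecture). There is therefore no proof in the paper to compare your proposal against.

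Your write-up is honestly labeled ``Towards a proof'' and is a strategy outline rather than an argument; as such it cannot be called correct or incorrect. The overall plan --- transport everything to rigged configurations via Theorems~\ref{th:main} and~\ref{th:ist}, express the counting functions as telescoping double differences of tau functions, and then handle the combinatorics of the $D^{(1)}_n$ KKR bijection by induction on the size of $S$ --- is consonant with what the paper itself suggests in the Introduction and in the text surrounding the conjecture. One concrete fragment the paper \emph{does} settle is the equivalence of (\ref{cj1}) and (\ref{cj2}): right after the conjecture the authors note that this follows from (\ref{nashi}), (\ref{rr1}) with $a=0$, (\ref{rr4}), (\ref{yy}) and Theorem~\ref{th:ist}. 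You allude to the same mechanism (the shift ${\rm rg}(s)\mapsto{\rm rg}(s)+{\rm lg}(s)$ on colour-$1$ strings matching $\gamma_{v_0}\to\gamma_{v^{\sigma_1}_0}$) but do not isolate it as an already-proved reduction. Your assessment that (\ref{cj5}) is the genuine bottleneck, and that a conceptually clean proof may only emerge alongside the bilinearization of the tropical $R$ \cite{KOTY2} and the ultradiscrete DKP hierarchy, matches the authors' own expectations stated in the Introduction. In short: your plan is reasonable and aligned with the paper's hints, but the conjecture remains open in the paper and your outline does not close the gap.
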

In (\ref{cj5}),  
$\varphi_0(p_{[k]})$ is the standard notation 
in crystal theory meaning 
$\max\{j\ge 0\mid \tilde{f}^j_0 p_{[k]}\neq 0\}$.
By using (\ref{nashi}), (\ref{rr1}) with $a=0$, 
(\ref{rr4}), (\ref{yy}) and Theorem \ref{th:ist}, 
one can show that (\ref{cj1}) and (\ref{cj2}) are equivalent.

The algorithm \cite{OSS,SS} 
for $\Phi^{\pm 1}$ seems valid not only for highest but  
{\em arbitrary} states if one allows negative rigging.
With such a generalization, we expect that Theorem \ref{th:ist} and 
Conjecture \ref{cj:tau} hold for any state,
which was indeed the case for type $A^{(1)}_n$ \cite{KSY}.

\begin{example}
For the initial state $p$ in Example \ref{ex:hatten2}, 
its rigged configuration is $S=\Phi(p) = 
\{(1, 8, -2), (1, 6, 0), (1, 2, -1), (1, 1, -1), (2, 8, 0)$, 
$(2, 6, -1),(2, 2, -1), (3, 8, -3), (4, 8, -1)\}$.
The ultradiscrete tau function 
$\tau^{(d)}_k(S)$ and $\varphi_0(p_{[k]})$ 
take the following values. 
\begin{equation*}
\begin{array}{|c|rcccccccc|}
\hline
 &k=1 \;& \, 2\; & \,3\; & \,4\; 
& 5 & 6 & 7 & 8 & 9\\
\hline
\tau^{(0)}_k(S)
& \;
6& 11& 21& 32& 39& 46& 53& 60& 67\\
\tau^{(1)}_k(S)
 &\;
0& 2& 7& 15& 22& 29& 36& 43& 50\\
\tau^{(2)}_k(S)
 &\;
1& 3& 9& 16& 23& 30& 37& 44& 51 \\
\tau^{(3)}_k(S)
 &\;
2& 6& 13& 24& 31& 38& 45& 52& 59\\
\tau^{(4)}_k(S)
 &\;
3& 8& 15& 24& 31& 38& 45& 52& 59\\
\varphi_0(p_{[k]})
 &\;
1& 0& 1& 0& 0& 0& 0& 0& 0\\
\hline
\end{array}
\end{equation*}
Comparing this with Example \ref{ex:soba}, one can check 
Conjecture \ref{cj:tau}.
\end{example}

Still many generalized energies in previous sections await 
formulas as in Conjecture \ref{cj:tau} to be discovered.
They are ultradiscrete analogue of the 
so called $X=M$ conjecture \cite{HKOTY,O}
in the sense that the generalized energies from crystal theory 
acquire explicit formulas of a fermionic nature 
originating in the combinatorial Bethe ansatz.
Such results combined with Theorems \ref{pr:rho}
and \ref{th:main} will lead to a piecewise linear formula
for the bijection $\Phi^{-1}$  as was done for $A^{(1)}_n$ \cite{KSY}.

Let us end by raising a closely related question as another future problem.
Let ${\mathcal P}_+(\lambda)$ denote the 
subset of ${\mathcal P}_+$ having the prescribed weight $\lambda$.
Then, does the generating function 
\begin{equation*}
X_g(\lambda) 
= \sum_{p \in {\mathcal P}_+(\lambda)}q^{{\mathcal E}_g(p)}
\end{equation*}
of the generalized energy 
admit a fermionic formula like Refs. \cite{KR} and \cite{HKOTY}?

\section*{Acknowledgments}
The authors thank Masato Okado for discussion.
This work is supported by Grants-in-Aid for 
Scientific Research No. 19540393, 
No. 21740114 and No. 17340047 from JSPS.

\end{document}